\def\a{{\alpha}}
\def\e{{\epsilon}}
\def\g{{\gamma}}
\def\H{{\mathbb H}}
\def\esf{\mathbb{S}}
\def\t{{\theta}}
\def\ve{{\varepsilon}}
\def\e{{\varepsilon}}
\newcommand{\R}{{\mathbb R}}
\newcommand{\N}{{\mathbb N}}
\newcommand{\M}{{\mathbb M}}
\newtheorem{theorem}{Theorem}[section]
\newtheorem{lemma}[theorem]{Lemma}
\newtheorem{proposition}[theorem]{Proposition}
\newtheorem{remark}[theorem]{Remark}
\newtheorem{corollary}[theorem]{Corollary}
\newtheorem{claim}[theorem]{Claim}
\begin{document}
\title{Classification of rotational special Weingarten surfaces of
  minimal type in ${\mathbb S}^2 \times \R$ and ${\mathbb H}^2
\times
  \R$}
\begin{abstract}
  In this paper we finish the classification of rotational special
  Weingarten surfaces in ${\mathbb S}^2 \times \R$
  and ${\mathbb H}^2\times \R$; i.e.  rotational surfaces in
  $\esf^2\times\R$ and $\H^2\times\R$ whose mean curvature $H$ and
  extrinsic curvature $K_e$ satisfy $H=f(H^2-K_e)$, for some function
  $f\in {\mathcal C}^1([0,+\infty))$ such that
  $4x(f'(x))^2<1$ for any $x\geq 0$.
\end{abstract}

\author{Filippo Morabito}
\author{M. Magdalena Rodr\'\i guez}
\thanks{This work was started during the first author
stay at Instituto de
  Matem\'atica Interdisciplinar of Universidad Complutense de
  Madrid. The second author is partially supported by a Spanish
  MEC-FEDER Grant no. MTM2007-61775 and a Regional J.
  Andaluc\'\i a Grant no. Grant P09-FQM-5088.}
\address{Instituto de Matem\'atica
Interdisciplinar, Universidad Complutense de Madrid, Plaza de las
  Ciencias 3, 28040, Madrid, Spain}
\address{Laboratoire de Mathématiques et Physique
Théorique UMR CNRS 6083, Université François Rabelais,
Parc Grandmont, 37200 Tours, France}
\email{morabitf@gmail.com}
\address{Departamento de Geometr\'\i a y Topolog\'\i a, Universidad de
  Granada, Spain}
\email{magdarp@ugr.es}
\keywords{special rotational Weingarten surfaces, ellipticity}
\subjclass[2000]{53A10}

\maketitle

\section*{Introduction}
An oriented Riemannian surface $\Sigma$ in a 3-manifold $M$ is
called a {\it special Weingarten surface} if there exists $f\in
{\mathcal
  C}^1([0,+\infty))$ such that
\begin{equation}\label{eq.intro}
  H=f(H^2-K_e),
\end{equation}
where $H$ and $K_e$ denote respectively the mean curvature and the
extrinsic curvature of~$\Sigma$, and $f$ satisfies
\begin{equation}\label{eq:eliptica}
4x(f'(x))^2<1,\quad \mbox{for any }\ x\geq 0 .
\end{equation}
When $f(0)\neq 0$, the special Weingarten surfaces are called {\it
of constant mean curvature type}; and they are called {\it of
minimal type} when $f(0)=0$. Observe that, when $f$ is constant, we
get constant mean curvature surfaces (minimal surfaces in the case
the constant is $0$).

The study of Weingarten surfaces started with H. Hopf~\cite{H}, P.
Hartman and W. Wintner~\cite{HW} and S. S. Chern~\cite{C}, who
considered compact Weingarten surfaces in $\R^3$. More recently E.
Toubiana and R. Sa Earp~\cite{ST1,ST,ST2}), studied rotational
special Weingarten surfaces in $\R^3$ and ${\mathbb H}^3$. In the
case $f(0)\neq 0$ (mean curvature type) they determined necessary
and sufficient conditions for  existence and  uniqueness of
examples whose geometrical behaviour is the same as the one of
Delaunay surfaces in $\R^3$, i.e.  unduloids (embedded) and nodoids
(non-embedded), which have non-zero constant mean curvature. In the
case $f(0)=0$ (minimal type) they estabilished the existence of
examples whose geometric behaviour is the same as the one of the
catenoid of $\R^3$, which is the only rotational minimal surface in
$\R^3$.

H. Rosenberg and R. Sa Earp \cite{RS} showed that compact special
Weingarten surfaces in $\R^3$ and $\H^3$ satisfy an a priori height
estimate, and used this fact to prove that the annular ends of a
special Weingarten surface $M$ are cylindrically bounded. Moreover,
if such a $M$ is non-compact and has finite topological type, then $M$
must have more than one end; if $M$ has two ends, then it must be a
rotational surface; and if $M$ has three ends, it is contained in a
slab. They followed the ideas by Meeks~\cite{M} and
Korevaar-Kusner-Solomon~\cite{KKS} for non-zero constant mean
curvature surfaces in $\R^3$.

In \cite{FM}, the first author determines necessary and sufficient
conditions for existence and uniqueness of rotational special
Weingarten surfaces in $\esf^2\times\R$ and ${\mathbb H}^2 \times
\R$ of constant mean curvature type ($f(0)\neq 0$).  In this paper
we establish similar results in the minimal type case ($f(0)=0$),
finishing the classification of the rotational special Weingarten
surfaces in $\esf^2\times\R$ and $\H^2\times\R$.

We finally remark that the results studied in this paper generalize
already known theorems for minimal surfaces~\cite{HH,PR,NR,NT}.

\section{Preliminaires}
Throughout this paper, all the surfaces are assume to be ${\mathcal
C}^2$, immersed, connected and orientable. In this section we 
remind
some general results about special Weingarten surfaces in the
product manifold ${\mathbb M}\times \R$, where $\M$ is a Riemannian
surface with constant sectional curvature (after passing to the
universal covering, $\M=\R^2,\H^2$ or $\esf^2$).

Let $\Sigma$ be a special Weingarten surface in ${\mathbb
M}\times\R$; i.e. $\Sigma$ verifies equation~\eqref{eq.intro},
\[
H=f(H^2-K_e)
\]
for some elliptic function $f\in {\mathcal C}^1([0,+\infty))$, i.e.
$f$ satisfies equation~\eqref{eq:eliptica}. Let $F$ denote the
immersion of $\Sigma$ in $\M\times\R$, and take a domain
$D\subset\Sigma$ with compact closure and smooth boundary. Consider
any normal variation of $D$ given by a differentiable map
$\psi:(-\e,\e)\times\Sigma\to\M\times\R$, with $\e>0$, such that
$\psi(0,p)=F(p)$ for any $p\in\Sigma$; $\psi(s,p)=F(p)$ for any
$p\in\Sigma-D$ and any $|s|<\e$; and the map $\psi_s:\Sigma\to
\M\times\R$ defined by $\psi_s(p)=\psi(s,p)$ is an immersion for any
$|s|<\e$. Call, respectively, by $H(s)$ and $K_e(s)$ the mean
curvature and the extrinsic curvature of $\psi_s(\Sigma)$; $H(0)=H$
and $K_e(0)=K_e$. The first variation formula of
$H(s)-f(H(s)^2-K_e(s))$ is given by
\[
\left.\frac{d}{ds}\right|_{s=0}\left[H(s)-f(H(s)^2-K_e(s))\right]=
\left(1-2Hf'(H^2-K_e)\right)H'(0)+f'(H^2-K_e)K_e'(0).
\]
Elbert~\cite{E} proved that the principal parts of $H'(0)$ and
$K_e'(0)$ are respectively $\frac 1 2 \Delta$ and $L$, where $\Delta$
is the Laplacian with respect to the induced metric on $\Sigma$ and
$L$ is the operator given by
\[
Lu= div\left(T(\nabla u)\right),
\]
with $T(X)=2HX-A(X)$, for any tangent  vector $X$. Here $A$
denotes the shape operator of $\Sigma$.
So the linearized operator of $H(s)-f(H(s)^2-K_e(s))$ is given by
\[
L_f=\left( \frac{1-2Hf'}{2} \right)\Delta +f' L .
\]
As $f$ is elliptic (i.e. $4x(f'(x))^2<1$ for any $x\geq 0$), then
the eigenvalues of the operator $L_f$ are positive (see~\cite{RS},
page 294).  Thus the operator $L_f$ is elliptic, and
equation~\eqref{eq.intro} is elliptic in the sense of Hopf~\cite{H}.
Hence the solutions of~\eqref{eq.intro} satisfy an interior and a
boundary maximum principles. Let $\Sigma_1,\Sigma_2$ be two oriented
special Weingarten surfaces in $\M\times\R$
satisfying ~\eqref{eq.intro} for the same function $f$,
 whose unit normal
vectors coincide at a common point $p$. For $i=1,2$, we can write
$\Sigma_i$ locally around $p$ as a graph of a function $u_i$ over a
domain in $T_p\Sigma_1=T_p\Sigma_2$ (in exponential coordinates). We
will say $\Sigma_1$ is above $\Sigma_2$ at $p$, and we will write
$\Sigma_1\geq\Sigma_2$, if $u_1\geq u_2$.

\begin{proposition}[Maximum principle \cite{H}]
  \label{prop:maximum}
  Let $\Sigma_1,\Sigma_2$ be two special Weingarten surfaces
  in ${\mathbb M}\times \R$ with respect to the same elliptic function
  $f$.  Let us suppose that
  \begin{itemize}
  \item $\Sigma_1$ and $\Sigma_2$ are tangent at an interior
    point $p\in\Sigma_1\cap\Sigma_2$; or
  \item there exists $p\in\partial\Sigma_1\cap\partial\Sigma_2$
    such that both $T_p\Sigma_1=T_p\Sigma_2$ and
    $T_p\partial\Sigma_1=T_p\partial\Sigma_2$.
  \end{itemize}
  Also suppose that the unit normal vectors of $\Sigma_1,\Sigma_2$
  coincide at $p$. If $\Sigma_1\geq\Sigma_2$ at $p$, then
  $\Sigma_1=\Sigma_2$ in a neighborhood of $p$. In the case
  $\Sigma_1,\Sigma_2$ have no boundary, then $\Sigma_1=\Sigma_2$.
\end{proposition}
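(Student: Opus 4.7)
My plan is to reduce the statement to the classical Hopf maximum principle applied to a linear elliptic equation satisfied by the difference of two graph functions. First, using normal exponential coordinates at $p$ with respect to the common tangent plane $T_p\Sigma_1=T_p\Sigma_2$, I will write each $\Sigma_i$ locally as the graph of a $C^2$ function $u_i$, with orientations chosen so that the upward normals coincide with the common unit normal at $p$. The tangency at $p$ gives $u_1(0)=u_2(0)$ and $\nabla u_1(0)=\nabla u_2(0)$, and the hypothesis $\Sigma_1\geq\Sigma_2$ reads $u_1\geq u_2$ on the common domain. In the boundary case, the additional hypothesis $T_p\partial\Sigma_1=T_p\partial\Sigma_2$ ensures that the projections of $\partial\Sigma_i$ share a common tangent at $0$, so on a small neighborhood I can arrange a common domain $\Omega$ whose boundary is smooth at the origin.

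Next I will linearize the Weingarten equation along the segment from $u_2$ to $u_1$. Setting $\mathcal{F}(u)=H[u]-f(H[u]^2-K_e[u])$ and $u_t=tu_1+(1-t)u_2$, the identity
\[
0=\mathcal{F}(u_1)-\mathcal{F}(u_2)=\int_0^1\frac{d}{dt}\mathcal{F}(u_t)\,dt=\mathcal{L}(u_1-u_2)
\]
produces a linear second-order operator $\mathcal{L}$ whose principal symbol is the average along the path $u_t$ of the symbol of the linearization $L_f=\frac{1-2Hf'}{2}\,\Delta+f'\,L$ recalled in the excerpt. The ellipticity condition $4x(f'(x))^2<1$ guarantees that each $L_f$ along $u_t$ has strictly positive eigenvalues, and the average of positive-definite symmetric operators is positive-definite; shrinking the neighborhood of $p$ if necessary yields uniform ellipticity of $\mathcal{L}$.

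With $\mathcal{L}(u_1-u_2)=0$ and $u_1-u_2\geq 0$ attaining its minimum value $0$ at the origin, the conclusion is a direct application of Hopf's theorem. In the interior case the strong maximum principle yields $u_1\equiv u_2$ near $0$, hence $\Sigma_1=\Sigma_2$ near $p$. In the boundary case I will argue by contradiction: if $u_1-u_2\not\equiv 0$ near $0$, then Hopf's boundary point lemma forces the outer normal derivative of $u_1-u_2$ at $0$ to be nonzero, contradicting $\nabla u_1(0)=\nabla u_2(0)$. Finally, when the $\Sigma_i$ are boundaryless, the set of points at which they locally agree is non-empty by the interior argument, open by that same argument, and closed by continuity of the tangent planes and unit normals; connectedness then gives $\Sigma_1=\Sigma_2$ globally.

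The hardest part will be verifying that $\mathcal{L}$ is genuinely (uniformly) elliptic rather than merely degenerate elliptic: this is exactly where the strict inequality $4x(f'(x))^2<1$ intervenes, through the positivity of the eigenvalues of $L_f$ cited from \cite{RS} and the stability of this positivity under averaging. A secondary technical point, in the boundary case, is the careful arrangement of a common smooth domain $\Omega$ on which both $u_1$ and $u_2$ are defined and for which the boundary point lemma can be legitimately applied.
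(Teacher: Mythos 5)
Your proposal is correct and follows essentially the route the paper itself relies on: the paper gives no detailed proof of Proposition~\ref{prop:maximum}, but derives it from the ellipticity of the linearized operator $L_f=\frac{1-2Hf'}{2}\Delta+f'L$ (via Elbert's first-variation formulas and the eigenvalue positivity from Rosenberg--Sa Earp, which is exactly where $4x(f'(x))^2<1$ enters) together with Hopf's classical interior and boundary maximum principles --- precisely the graph/linearize-along-the-segment argument you carry out, and your verification of uniform ellipticity of the averaged operator $\mathcal{L}$ is the right way to make the citation honest.

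The only step that is thinner than it looks is the closedness half of your final open--closed argument for boundaryless surfaces. At a limit point $q$ of the coincidence set, continuity of tangent planes and normals only tells you the surfaces are tangent there and that $w=u_1-u_2$ vanishes on open sets accumulating at the origin; the ordering hypothesis $w\geq 0$, which you need in order to re-invoke the strong maximum principle, is not available at $q$. To conclude $w\equiv 0$ near $q$ you must instead appeal to unique continuation for second-order elliptic equations in the plane, or equivalently to the Hartman--Wintner/Bers local structure theorem (the nodal set of a nontrivial solution consists of finitely many arcs crossing at each zero, hence has empty interior). This is a standard but genuinely extra ingredient beyond Hopf's lemma, and it is worth naming explicitly since the coefficients of $\mathcal{L}$ are only continuous when $f$ is merely $\mathcal{C}^1$.
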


We will consider special Weingarten surfaces of minimal type; i.e.
we assume $f(0)=0$. Observe the first examples we get are horizontal
slices $\M\times\{t_0\}$, whose principal curvatures vanish
identically.

We call {\it height function} of $\Sigma$ to the restriction to
$\Sigma$ of the horizontal projection of $\M\times\R$ over $\R$.

\begin{corollary}\label{cor:t}
  Let $\Sigma$ be a rotational special Weingarten surface of minimal
  type in ${\mathbb M}\times \R$. If the height function of $\Sigma$
  has either a local maximum or a local minimum at an interior point
  $p$ of $\Sigma$, then $\Sigma$ is contained in the horizontal slice
  passing through $p$.
\end{corollary}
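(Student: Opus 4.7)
The plan is to compare $\Sigma$ with the horizontal slice $S:=\M\times\{t_0\}$ through $p$ (where $t_0$ is the height of $p$) and apply the tangency form of the maximum principle, Proposition~\ref{prop:maximum}. Since $S$ is totally geodesic we have $H\equiv K_e\equiv 0$ on $S$, so the hypothesis $f(0)=0$ ensures that $S$ itself satisfies~\eqref{eq.intro}, making it a special Weingarten surface of minimal type for the same function $f$.

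Since $p$ is an interior critical point of the height function, its differential vanishes at $p$, which forces $T_p\Sigma$ to be horizontal and hence equal to $T_pS$. Orient $S$ so that its unit normal agrees with that of $\Sigma$ at $p$. Writing both surfaces locally as graphs over $T_p\Sigma$ in exponential coordinates, $S$ corresponds to the zero function, and because $p$ is by assumption a local maximum (resp. local minimum) of the height, $\Sigma$ lies weakly below (resp. above) $S$ on a neighborhood of $p$. This is precisely the ordered-tangency configuration required by Proposition~\ref{prop:maximum}, which therefore yields $\Sigma=S$ on a neighborhood of $p$.

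To promote this local coincidence to $\Sigma\subset S$ globally, I would run a standard open-closed argument on the set
\[
W:=\{\,q\in\Sigma : \Sigma=S \text{ on some neighborhood of } q\,\}.
\]
The set $W$ is open by construction and non-empty by the previous step. For closedness, if $q_n\in W$ converges to $q\in\Sigma$, then $q\in S$ by continuity, and $T_q\Sigma$ is horizontal as a limit of horizontal tangent planes, placing $\Sigma$ and $S$ again in the tangent configuration at $q$; the one-sidedness is automatic because $\Sigma$ already coincides with $S$ on a piece accumulating at $q$, so in any local graphical representation of $\Sigma$ over $T_q\Sigma$ the defining function vanishes on an open subset touching the origin and therefore has a definite sign on a suitable half-disc. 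A fresh application of Proposition~\ref{prop:maximum} gives $q\in W$, and connectedness of $\Sigma$ closes the argument.

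The main subtlety is the closedness step, specifically the extraction of the one-sided comparison at the limit point $q$, which is why the local coincidence established by the first maximum-principle application is crucial before passing to the global conclusion. The rotational symmetry of $\Sigma$ is not strictly needed for this line of argument, but it would also permit a parallel proof through ODE uniqueness for the profile curve of $\Sigma$, which at the parameter corresponding to $p$ meets the horizontal line $\{z=t_0\}$ tangentially and must therefore coincide with it, since both are solutions of the same Lipschitz second-order ODE coming from~\eqref{eq.intro}.
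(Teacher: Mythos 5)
Your proof is correct and follows essentially the same route as the paper, whose entire argument is the single sentence that one applies the maximum principle (Proposition~\ref{prop:maximum}) to $\Sigma$ and the corresponding horizontal slice. You have simply made explicit the details the paper leaves implicit: that the slice satisfies~\eqref{eq.intro} for the same $f$ because $f(0)=0$, the ordered-tangency configuration at the interior critical point of the height function, and the open-closed continuation argument that upgrades the local coincidence to the global inclusion $\Sigma\subset\M\times\{t_0\}$.
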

\begin{proof}
  It suffices to apply the maximum principle
  (Proposition~\ref{prop:maximum}) to $\Sigma$ and the corresponding
  horizontal slice.
\end{proof}

As a consequence, we get the following ``halfspace-type theorem'' in
$\esf^2\times\R$.

\begin{corollary}\label{cor:half}
  If $\Sigma\subset\esf^2\times\R$ is a rotational special Weingarten
  surface of minimal type contained in a horizontal halfspace
  $\esf^2\times[t_0,+\infty)$ or $\esf^2\times(-\infty,t_0]$, for some
  $t_0\in\R$, then $\Sigma$ is contained in a horizontal slice.
\end{corollary}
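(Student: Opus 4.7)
The plan is to reduce Corollary~\ref{cor:half} to Corollary~\ref{cor:t} by exhibiting a point $p^{\ast}\in\Sigma$ at which the height function attains a local minimum. Assume without loss of generality that $\Sigma\subset\esf^2\times[t_0,+\infty)$ (the lower halfspace case follows by reflection across a horizontal slice), write $t$ for the height function of $\Sigma$, and set $t^{\ast}:=\inf_{\Sigma} t\geq t_0$. If $t^{\ast}$ is realised at some $p^{\ast}\in\Sigma$, then $t$ has a global, and in particular local, minimum at $p^{\ast}$; Corollary~\ref{cor:t} then forces $\Sigma\subset\esf^2\times\{t^{\ast}\}$, which is the desired conclusion.

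So the whole content of the proof is the attainment of $t^{\ast}$, and the decisive ingredient is the compactness of the base $\esf^2$: for any $T>t^{\ast}$ the horizontal slab $\esf^2\times[t_0,T]$ is a compact subset of $\esf^2\times\R$. Picking a minimizing sequence $(p_n)\subset\Sigma$ with $t(p_n)\searrow t^{\ast}$, eventually these points lie in the compact slab $\esf^2\times[t_0,t^{\ast}+1]$, so up to a subsequence they converge to some $p^{\ast}\in\esf^2\times\R$ with $t(p^{\ast})=t^{\ast}$. To see that $p^{\ast}\in\Sigma$ I would use the rotational structure: up to the rotational symmetry $\Sigma$ is generated by a complete profile curve in the closed half-strip $[0,\pi]\times[t_0,+\infty)$ of the vertical half-plane, and local existence/uniqueness for the second-order ODE governing such a curve (a consequence of the ellipticity of equation~\eqref{eq.intro}) identifies any accumulation point of the profile in the interior of the strip with an actual point of the profile.

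I expect the step in which $p^{\ast}\in\Sigma$ is verified to be the main, if relatively minor, obstacle, since a~priori the profile could asymptote to an omitted limit point; this is precisely what the ellipticity condition $4x(f'(x))^2<1$ is designed to preclude at the ODE level. Once $p^{\ast}\in\Sigma$ is secured, applying Corollary~\ref{cor:t} at $p^{\ast}$ closes the argument.
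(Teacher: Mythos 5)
Your overall strategy --- reduce to Corollary~\ref{cor:t} by producing an interior point where the height function attains its infimum --- is exactly what the paper intends (it offers no written proof beyond ``as a consequence'' of Corollary~\ref{cor:t}), and you are right that the attainment of $t^{\ast}$ is where all the content lies. The problem is that your justification of that step does not work. First, the point where you expect the minimum to occur is precisely one where $t'=0$, and the ODE system~\eqref{eq:system.2} governing the profile away from critical points of $t$ is singular there ($v_4=t'$ appears in a denominator), so ``local existence/uniqueness'' cannot be invoked in the form you suggest at the putative limit point. (This degenerate case is in fact harmless: if the profile reaches a point with $t'=0$ at finite arc length, the argument of Lemma~\ref{lem:no0} together with the maximum principle already forces $\Sigma$ into a slice, which is the desired conclusion.) Second, and more seriously, the dangerous scenario is not an omitted limit point at finite parameter value but a complete profile ($I$ unbounded) along which $t$ is strictly monotone and merely asymptotes to $t^{\ast}$ without attaining it. Ellipticity does not preclude this: it is exactly what happens for the catenoidal examples in $\H^2\times\R$ (Theorem~\ref{th:h2}), which satisfy the same ellipticity hypothesis, lie in a horizontal slab, and are precisely why the corollary fails in $\H^2\times\R$.

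What rules this scenario out in $\esf^2\times\R$ is not the compactness of the slab per se but the boundedness of $\phi\in[0,\pi]$ combined with the arc-length normalization $(\phi')^2+(t')^2=1$. If $t$ is monotone and bounded on a parameter interval of infinite length, then $\int|t'|<\infty$; if $\phi$ were also eventually monotone, then $\int|\phi'|<\infty$ as well, contradicting $|\phi'|+|t'|\geq 1$ on an interval of infinite length. Hence $\phi$ has critical points arbitrarily far out, and Lemma~\ref{symmetry}, applied at two such critical points, makes $t$ periodic up to a nonzero vertical translation and therefore unbounded below --- a contradiction. Some argument of this kind (or the periodicity established in Lemma~\ref{lem:minmax}) is needed to secure $p^{\ast}\in\Sigma$; as written, that step is a genuine gap.
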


Theorem~\ref{th:h2} says Corollary~\ref{cor:half} does not hold in
$\H^2\times\R$, since there are examples of catenoidal type
contained in horizontal slabs (also see~\cite{NR, NT}).

\medskip

The following proposition shows one of the
common aspects
of the theory
of classical minimal surfaces and the theory of special Weingarten
surfaces of minimal type.

\begin{proposition}
  \label{prop:sign}
  The extrinsic curvature $K_e$ of a special Weingarten surface
  $\Sigma$ of minimal type in $\M\times\R$ is non-positive. Moreover
  $K_e=0$ if, and only if, both principal curvatures vanish identically.
\end{proposition}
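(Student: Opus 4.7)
My plan is to reduce the geometric statement to a one-variable analytic inequality for $f$. At any point of $\Sigma$, writing $k_1, k_2$ for the principal curvatures, one has the algebraic identity
\[
H^2-K_e=\Bigl(\tfrac{k_1-k_2}{2}\Bigr)^{\!2}\geq 0,
\]
so $x:=H^2-K_e$ is a non-negative real. The Weingarten relation $H=f(x)$ then yields $K_e=H^2-x=f(x)^2-x$. Hence proving $K_e\leq 0$ pointwise amounts to the claim that $f(x)^2\leq x$ for every $x\geq 0$, with equality only at $x=0$. This is purely a statement about $f$.

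To handle this, I would study the auxiliary function $\psi(x)=x-f(x)^2$, which satisfies $\psi(0)=0$ and $\psi'(x)=1-2f(x)f'(x)$. The crucial observation is that wherever $\psi(x)\geq 0$, i.e.\ $f(x)^2\leq x$, the ellipticity condition gives
\[
\bigl(2f(x)f'(x)\bigr)^{2}=f(x)^2\cdot 4(f'(x))^2\;\leq\; x\cdot 4(f'(x))^2\;<\;1,
\]
so $|2f(x)f'(x)|<1$ and therefore $\psi'(x)>0$. Thus $\psi$ is strictly increasing on any interval on which it is non-negative.

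A standard continuity argument then propagates $\psi\geq 0$ globally. If $T:=\inf\{y>0:\psi(y)<0\}$ were positive, then $\psi\geq 0$ on $[0,T]$, so $\psi$ would be strictly increasing there, giving $\psi(T)>\psi(0)=0$ and contradicting the continuity-forced $\psi(T)\leq 0$. And $T=0$ is excluded because $\psi'(0)=1-2f(0)f'(0)=1$ forces $\psi(y)=y+o(y)>0$ for small $y>0$. Hence $\psi\geq 0$ on $[0,\infty)$, and the same monotonicity upgrades this to $\psi(x)>0$ for every $x>0$.

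Unwinding: $K_e=-\psi(x)\leq 0$, with equality iff $x=H^2-K_e=0$, i.e.\ $k_1=k_2$. Combined with $H=f(0)=0$ at such a point, this forces $k_1=k_2=0$, and conversely both principal curvatures vanishing gives $K_e=0$. The only step with content is the estimate in the second paragraph, which uses the strict ellipticity inequality together with $f(0)=0$ (the latter entering through the bootstrap $f(x)^2\leq x$); no earlier result of the paper is needed.
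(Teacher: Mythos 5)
Your proof is correct, but it takes a genuinely different route from the paper's. The paper works with the half-difference of the principal curvatures: writing the Weingarten relation as $g\bigl(\tfrac{k_2-k_1}{2}\bigr)=-k_1$ with $g(x)=x-f(x^2)$, it invokes Lemma~\ref{lem:increasing} (ellipticity evaluated at $x^2$ gives $g'(x)=1-2xf'(x^2)>0$ immediately, with no bootstrap) to conclude that $g$ is strictly increasing with its only zero at $0$, and then reads off that $k_1$ and $k_2$ have strictly opposite signs unless both vanish --- sign information that is reused later in the paper (e.g.\ in Lemma~\ref{lem:sign} and Theorem~\ref{th:h2}). You instead work directly with $x=H^2-K_e$ and prove the scalar inequality $f(x)^2\leq x$ via the auxiliary function $\psi(x)=x-f(x)^2$; since the derivative bound $|2f(x)f'(x)|<1$ is only available where $\psi\geq 0$ already holds, you need the continuity/infimum bootstrap to propagate it. That argument is sound, but it is worth noting that your key inequality $|f(x)|<\sqrt{x}$ for $x>0$ falls out in one line from the paper's substitution, since $g(\sqrt{x})>0$ and $\bar g(\sqrt{x})>0$ give $\mp f(x)<\sqrt{x}$; so the paper's change of variable is what lets it avoid your bootstrap entirely. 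What your version buys is a clean, self-contained statement about $f$ alone ($K_e=-\psi(H^2-K_e)$ with $\psi>0$ on $(0,\infty)$); what it loses is the pointwise sign dichotomy for $k_1,k_2$, though that is easily recovered afterwards from $k_1k_2=K_e<0$. Your treatment of the equality case (using $H=f(0)=0$ to force $k_1=k_2=0$) matches the paper's conclusion.
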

\begin{proof}
  Let $k_1$ and $k_2$ denote the principal curvatures of
  $\Sigma$. Then $H^2-K_e=\frac{(k_1-k_2)^2}{4}$.

  If we consider $g(x)=x-f(x^2)$, the Weingarten surface
  equation~\eqref{eq.intro} can be rewritten as
  \begin{equation}
    \label{eq:k}
    g\left(\frac{k_2-k_1}{2}\right)=-k_1  \qquad\mbox{or}\qquad
    g\left(\frac{k_1-k_2}{2}\right)=-k_2 .
  \end{equation}
  Using Lemma \ref{lem:increasing} below, we get that $g$ is a
  strictly increasing function which only vanishes at $x=0$.  Then we
  directly deduce from~\eqref{eq:k} that, given $p\in\Sigma$:
  \begin{itemize}
  \item $k_1(p)>k_2(p)$ if, and only if, $k_1(p)>0>k_2(p)$.
  \item $k_1(p)<k_2(p)$ if, and only if, $k_1(p)<0<k_2(p)$.
  \item $k_1(p)=k_2(p)$ if, and only if, $k_1(p)=0=k_2(p)$.
  \end{itemize}
  This proves Proposition~\ref{prop:sign}.
\end{proof}

\begin{remark}
  Proposition~\ref{prop:sign} holds in an arbitrary 3-manifold.
\end{remark}

Finally, let us prove the following technical Lemma that we will use
throughout the paper.

\begin{lemma} \label{lem:increasing}
 Given $f \in {\mathcal C}^1([0,+\infty))$, the following statements
 are equivalent:
 \begin{enumerate}
 \item $f$ is elliptic and $f(0)=0$.
 \item $g(x)=x-f(x^2)$ is a strictly increasing function with $g(0)=0$.
 \item $\bar g(x)=x+f(x^2)$ is strictly increasing and $\bar
   g(0)=0$.
 \end{enumerate}
\end{lemma}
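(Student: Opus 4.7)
The plan is a direct computation of derivatives, with the only subtlety being the substitution $s=x^2$ used to pass between the elliptic condition on $f$ and the monotonicity condition on $g,\bar g$. First I note that the value-at-zero conditions are trivially all equivalent: $g(0)=-f(0)$ and $\bar g(0)=f(0)$, so $g(0)=0 \iff f(0)=0 \iff \bar g(0)=0$.

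Next I compute, for $x\in\R$,
\[
g'(x)=1-2x\,f'(x^2),\qquad \bar g'(x)=1+2x\,f'(x^2).
\]
In particular $g'(0)=\bar g'(0)=1>0$, so the behaviour at the origin never obstructs monotonicity, and the question reduces to whether $g'(x)>0$ (resp.\ $\bar g'(x)>0$) for every $x\neq 0$. For such $x$, set $s=x^2>0$; then
\[
(2x\,f'(x^2))^2 = 4s\,(f'(s))^2,
\]
so the elliptic condition $4s(f'(s))^2<1$ for every $s\geq 0$ is equivalent to $|2x\,f'(x^2)|<1$ for every $x\in\R$ (the value $x=0$ contributes only the trivial inequality $0<1$).

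From $|2x\,f'(x^2)|<1$ one reads off simultaneously $g'(x)=1-2x\,f'(x^2)>0$ and $\bar g'(x)=1+2x\,f'(x^2)>0$, hence both $g$ and $\bar g$ are strictly increasing. Conversely, if $g$ is strictly increasing then $g'(x)\geq 0$ for all $x$, which gives $2x\,f'(x^2)\leq 1$; replacing $x$ by $-x$ yields $-2x\,f'(x^2)\leq 1$, so $|2x\,f'(x^2)|\leq 1$, and in fact strict inequality since the $C^1$ function $g'$ vanishing at a point $x_0\neq 0$ would contradict strict monotonicity on a neighborhood (alternatively, the strict inequality in the elliptic definition is recovered by the same argument applied to $\bar g$ via $x\mapsto -x$). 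The same reasoning works starting from (3).

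The step I would watch most carefully is the substitution $s=x^2$: the elliptic hypothesis is stated in the variable where $f$ is evaluated (so in $x^2$), while the derivatives of $g,\bar g$ naturally involve $x$, and confusing the two is the only real pitfall. Everything else is one line of calculus plus the standard fact that a $C^1$ function with everywhere positive derivative is strictly increasing.
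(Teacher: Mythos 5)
Your overall strategy is the same as the paper's: differentiate, use the substitution $s=x^2$ to translate the elliptic condition $4s(f'(s))^2<1$ into $|2x\,f'(x^2)|<1$ for all real $x$, and read off $g'>0$ and $\bar g'>0$. The implication $(1)\Rightarrow(2),(3)$ and the equivalence of the value-at-zero conditions are handled correctly and essentially identically to the paper.

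There is, however, a false step in your converse direction, precisely where you try to recover the \emph{strict} inequality. You correctly note that strict monotonicity of $g$ only gives $g'\geq 0$, hence $|2x\,f'(x^2)|\leq 1$; but your claim that ``the $C^1$ function $g'$ vanishing at a point $x_0\neq 0$ would contradict strict monotonicity on a neighborhood'' is wrong: $x\mapsto x^3$ is smooth, strictly increasing, and has derivative vanishing at a point. An isolated zero of $g'$ is perfectly compatible with strict monotonicity, so this does not close the gap. Your parenthetical alternative (run the argument on $\bar g$) does not help either, since $\bar g(x)=-g(-x)$, so $\bar g$ carries exactly the same information as $g$ and again yields only the non-strict bound. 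Indeed one can construct $f\in{\mathcal C}^1([0,+\infty))$ with $f(0)=0$ such that $g$ is strictly increasing yet $4t_0(f'(t_0))^2=1$ at a single $t_0>0$ (arrange $2x\,f'(x^2)\le 1$ with equality at one point $x_0>0$), so the strict elliptic inequality cannot be deduced from statement (2) as literally written. To be fair, the paper's own proof of $(2)\Rightarrow(1)$ quietly replaces ``$g$ strictly increasing'' by the stronger hypothesis ``$g'>0$ everywhere,'' under which both your computation and the paper's are correct; the clean repair is to phrase (2) and (3) in terms of $g'>0$ and $\bar g'>0$. Note that only the direction $(1)\Rightarrow(2),(3)$ is actually invoked elsewhere in the paper, and that part of your argument is sound.
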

\begin{proof}
  Let us prove {\it (1)} $\Rightarrow$ {\it (2)}.  Suppose $f$ is
  elliptic.  Considering $x^2$ instead of $x$, we deduce that the
  ellipticity of $f$ is equivalent to
  \[
  -1<2xf'(x^2)<1, \quad\mbox{for any }\ x\in\R .
  \]
  Hence $g'(x)=1-2 xf'(x^2)>0$, and $g$ is a strictly increasing
  function. Finally, it is clear that $g(0)=-f(0)=0$.

  Conversely, suppose $g'(x)=1-2 xf'(x^2)>0$ for any $x\in\R$; i.e.
  $2xf'(x^2)<1$ for any $x\in\R$. Since this holds for 
  positive and
  negative values of $x$, we get $|2xf'(x^2)|<1$. And then $4t
  (f'(t))^2<1$, $\forall t\geq 0$. Moreover, $f(0)=-g(0)$. This
  proves {\it (2)} $\Rightarrow$ {\it (1)}.

  {\it (1)} $\Leftrightarrow$ {\it (3)} can be proved similarly.
\end{proof}

\begin{corollary}\label{cor:limit}
  Let $f \in {\mathcal C}^1([0,+\infty))$ be an elliptic function such
  that $f(0)=0$. Then there exist
  \[
  \ell_{-\infty}=\lim_{r \to -\infty}(r-f(r^2))\quad\mbox{and
  }\quad\ell_{+\infty}=\lim_{r\to+\infty}(r-f(r^2)),
  \]
  with $\ell_{-\infty}\in[-\infty,0)$ and $\ell_{+\infty}\in(0,+\infty]$.
\end{corollary}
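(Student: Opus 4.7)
The plan is to reduce the statement directly to Lemma \ref{lem:increasing}. Observe that the expression inside each limit is precisely $g(r)=r-f(r^2)$. Since the hypotheses on $f$ match part \emph{(1)} of the lemma, I may apply the equivalence to conclude that $g:\R\to\R$ is strictly increasing and satisfies $g(0)=0$.

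Once $g$ is known to be monotone, the existence of both limits in the extended real line $[-\infty,+\infty]$ is a standard fact about monotone functions on $\R$: a nondecreasing function always admits a (possibly infinite) limit at $\pm\infty$, namely the supremum and infimum of its range. So $\ell_{+\infty}=\sup_{r\in\R}g(r)\in(-\infty,+\infty]$ and $\ell_{-\infty}=\inf_{r\in\R}g(r)\in[-\infty,+\infty)$ are well defined.

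It remains to locate the signs. Strict monotonicity combined with $g(0)=0$ forces $g(1)>0$ and $g(-1)<0$. For any $r>1$ we then have $g(r)>g(1)>0$, so $\ell_{+\infty}\geq g(1)>0$; symmetrically, for $r<-1$ one has $g(r)<g(-1)<0$, so $\ell_{-\infty}\leq g(-1)<0$. This gives exactly $\ell_{+\infty}\in(0,+\infty]$ and $\ell_{-\infty}\in[-\infty,0)$, as claimed.

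There is no real obstacle here: once Lemma \ref{lem:increasing} is invoked, the argument is a one-line application of the monotone-limit theorem together with the observation that strict monotonicity at the single point $r=0$ propagates an open sign to every point beyond $\pm 1$. The only thing one has to be careful about is not to write $\ell_{\pm\infty}$ as a closed inequality at $0$: strictness of the increase of $g$ is what rules out $\ell_{+\infty}=0$ and $\ell_{-\infty}=0$.
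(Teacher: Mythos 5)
Your proposal is correct and follows exactly the route the paper intends: the corollary is stated without proof precisely because it is an immediate consequence of Lemma~\ref{lem:increasing}, namely that $g(r)=r-f(r^2)$ is strictly increasing with $g(0)=0$, so the limits at $\pm\infty$ exist in the extended reals and the strict inequalities $g(\pm 1)\gtrless 0$ pin down the signs. Nothing is missing.
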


\section{Rotational special Weingarten surfaces of minimal type}
\label{sec:H2}

Given $\ve\in\{1,-1\}$, $\M_\ve$ will denote the sphere $\esf^2$, when
$\ve=1$, or the hyperbolic plane $\H^2$, when $\ve=-1$.

We consider in $\esf^2=\{(x_1,x_2,x_3)\in\R^3\ |\
x_1^2+x_2^2+x_3^2=1\}$ the usual metric $dx_1^2+dx_2^2+dx_3^2$ induced
from $\R^3$;  and we will see $\H^2$ as a subvariety of $\mathbb{L}^3$, this is
\[
\H^2=\{(x_1,x_2,x_3)\in\R^3\ |\ x_1^2+x_2^2-x_3^2=-1,\ x_3>0\}
\]
with the induced metric $dx_1^2+dx_2^2-dx_3^2$.

Define
\[
S_\ve(x)=\left\{\begin{array}{ll}
    \sin(x), & \mbox{when } \ve=1  \\
    \sinh(x), & \mbox{when } \ve=-1
    \end{array}\right.
\]
\[
C_\ve(x)=\left\{\begin{array}{ll}
    \cos(x), & \mbox{when } \ve=1  \\
    \cosh(x), & \mbox{when } \ve=-1
    \end{array}\right.
\]
for $x\in I_\ve$, where $I_\ve=[0,\pi]$ if $\ve=1$ and
$I_\ve=[0,+\infty)$ if $\ve=-1$. We consider $\M_\ve\times\R$
parameterized as
\[
\{(S_\ve(\phi)\, \cos\theta,S_\ve(\phi)\,
\sin\theta,C_\ve(\phi),t)\ |\ (\phi,\theta,t)\in
I_\ve\times[0,2 \pi)\times\R\}.
\]

Let $\Sigma_\g$ be a rotational surface in $\M_\ve\times\R$
obtained by rotating a curve
\[
\g(s)=(S_\ve(\phi(s)), 0,C_\ve(\phi(s)), t(s)),\quad s\in I\subset\R ,
\]
around the axis $\{(0,0,1)\}\times\R$, where $I$ is an open interval.
The surface $\Sigma_\g$ is then parameterized by
\[
F(s,\t)=\left(S_\ve(\phi(s))\cos\theta, S_\ve(\phi(s))\sin\theta,
  C_\ve(\phi(s)), t(s)\right),\quad s\in I\subset\R,\ \t\in[0,2\pi).
\]
Assume $s$ is the arc-length parameter of $\g$; i.e.
$\phi'(s)^2+t'(s)^2=1$.  The principal curvatures of $\Sigma_\g$ at
the point $F(s,\t)$ with respect to the unit normal vector field
\begin{equation}\label{eq:normal}
N(s,\t)= \Big(t'(s) C_\ve(\phi(s)) \cos\t,\, t'(s)
C_\ve(\phi(s))\sin\t,\, -t'(s)S_\ve(\phi(s)),\, -\phi'(s)\Big)
\end{equation}
are given by
\[
k_1(s)=t''(s) \phi'(s)-t'(s) \phi''(s) \quad\mbox{and}\quad
k_2(s)=t'(s)\eta_\ve(\phi(s)) ,
\]
where
\[
\eta_\ve(x)=\left\{\begin{array}{ll}
    \cot(x),  & \mbox{when } \ve=1 ,\\
    \coth(x), & \mbox{when } \ve=-1 .
    \end{array}\right.
\]
Hence the Weingarten surface equation~\eqref{eq.intro} becomes
\begin{equation}\label{eq:W}
  \frac{t''\phi'-t'\phi''+t'\eta_\ve(\phi)}{2}=
  f\left(\frac{(t''\phi'-t'\phi''-t'\eta_\ve(\phi))^2}{4}\right).
\end{equation}
Remind $f$ denotes a fixed elliptic function such that $f(0)=0$.

\begin{remark}
  Observe that, if $\phi(s),t(s)$ solve equation \eqref{eq:W}, then
  $\phi(s),\widetilde t(s)=t(s)+t_0$ also do, for any $t_0\in\R$. In
  particular, we can identify rotational special Weingarten surfaces
  of minimal type in $\M_\ve\times\R$ by vertical translations in the
  direction of $\R$.
\end{remark}

It is clear that $\phi(s)=s,t(s)=t_0$ solve equation~\eqref{eq:W} for
any $t_0\in\R$.  We then check the horizontal slices
$\M_\ve\times\{t_0\}$ are particular cases of rotational special
Weingarten surfaces of minimal type in $\M_\ve\times\R$, as we already
knew.

We deduce from Corollary~\ref{cor:t} that, if $\Sigma_\g$ is not
contained in a horizontal slice, then $t$ cannot have a local maximum
nor a local minimum. In particular, either $t'\geq 0$ or $t'\leq 0$.
The following Lemma shows the inequalities are strict.

\begin{lemma}\label{lem:no0}
  Let $\Sigma_\g$ be a rotational special Weingarten surface of
  minimal type in $\M_\ve\times\R$ which is not contained in a
  horizontal slice. Then $t'$ never vanishes at an interior point of
  $\Sigma_\g$.
\end{lemma}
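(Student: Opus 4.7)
The plan is to argue by contradiction, assuming $t'(s_0)=0$ at some interior point $s_0$. By Corollary~\ref{cor:t} the height function $t$ has no interior extremum, so the continuous function $t'$ cannot change sign and I may assume $t'\ge 0$ throughout. The arc-length condition $\phi'(s_0)^2+t'(s_0)^2=1$ forces $|\phi'(s_0)|=1$, and by continuity $\phi'$ does not vanish in a neighborhood of $s_0$. From the formula $k_2=t'\eta_\ve(\phi)$ I read $k_2(s_0)=0$; hence $K_e(s_0)=0$, and Proposition~\ref{prop:sign} yields $k_1(s_0)=0$ as well.

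Next I reparametrize the profile curve locally as a graph $t=u(\phi)$, so that the initial data become $u(\phi_0)=t(s_0)$ and $\dot u(\phi_0)=t'(s_0)/\phi'(s_0)=0$. A direct calculation gives
\[
k_1=\frac{\ddot u}{(1+\dot u^2)^{3/2}},\qquad k_2=\frac{\dot u\,\eta_\ve(\phi)}{\sqrt{1+\dot u^2}}.
\]
The decisive step is to rewrite \eqref{eq.intro} as an \emph{explicit} second-order ODE with $C^1$ right-hand side. Using the identity $g\bigl(\tfrac{k_2-k_1}{2}\bigr)=-k_1$ derived in the proof of Proposition~\ref{prop:sign} (with $g(x)=x-f(x^2)$) and setting $w=(k_2-k_1)/2$, this rearranges to $k_2=w+f(w^2)=\bar g(w)$. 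By Lemma~\ref{lem:increasing}, $\bar g$ is a strictly increasing $C^1$ function with $\bar g(0)=0$ and $\bar g'(0)=1$, so its local inverse $\bar g^{-1}$ is $C^1$ near $0$. Consequently $k_1=k_2-2\bar g^{-1}(k_2)=:h(k_2)$ is a $C^1$ function of $k_2$ with $h(0)=0$, and \eqref{eq.intro} becomes
\[
\ddot u=(1+\dot u^2)^{3/2}\,h\!\left(\frac{\dot u\,\eta_\ve(\phi)}{\sqrt{1+\dot u^2}}\right),
\]
whose right-hand side is $C^1$ on a neighborhood of $(\phi_0,0)$ (provided $\phi_0$ avoids the rotation axis, where $\eta_\ve$ is smooth).

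Both our $u$ and the constant map $\tilde u\equiv t(s_0)$ solve this ODE with the same initial data, so Picard--Lindel\"of forces $u\equiv t(s_0)$ on a neighborhood of $\phi_0$. A nonempty open piece of $\Sigma_\gamma$ therefore lies in the slice $\M_\ve\times\{t(s_0)\}$, and Proposition~\ref{prop:maximum}, together with the connectedness of $\Sigma_\gamma$, forces $\Sigma_\gamma$ to coincide with that slice---the desired contradiction. The main obstacle is precisely this passage from the implicit Weingarten relation to an explicit $C^1$ ODE: it rests on $\bar g'(0)=1\neq 0$, which is exactly where the minimal-type assumption $f(0)=0$ combines with ellipticity to make the inverse function theorem applicable at the origin.
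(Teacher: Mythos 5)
Your argument is correct and follows the same overall strategy as the paper's proof: assume $t'(s_0)=0$, deduce $\phi'(s_0)=\pm 1$, convert the Weingarten relation into an explicit ODE with a $\mathcal{C}^1$ (hence locally Lipschitz) right-hand side, note that the horizontal slice provides a second solution with the same degenerate initial data, apply Picard--Lindel\"of, and conclude with the maximum principle. Where you genuinely diverge is in how the implicit relation is made explicit. The paper keeps the arc-length parametrization, writes the equation as $\widetilde G(\phi,\phi',t',t'')=0$, shows $\partial\widetilde G/\partial w\neq 0$ via ellipticity (Claim~\ref{cl:G}), and invokes the implicit function theorem to solve for $t''$, arriving at the four-dimensional first-order system~\eqref{eq:system}. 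You instead pass to the graph parametrization $t=u(\phi)$ and invert $\bar g$ from Lemma~\ref{lem:increasing} to get the explicit scalar relation $k_1=k_2-2\bar g^{-1}(k_2)$; since $\bar g'=1+2xf'(x^2)>0$ everywhere, $\bar g^{-1}$ is in fact $\mathcal{C}^1$ on the whole range of $\bar g$, which contains a neighborhood of $0$ --- all you need, as $k_2\to 0$ at $s_0$. Both devices are two faces of the same ellipticity hypothesis, but yours yields a single explicit second-order ODE rather than an implicitly defined system, at the mild cost of a suppressed case distinction on the sign of $\phi'$ (which only flips signs and does not affect the conclusion). One small clarification: $\bar g'(0)=1$ does not use $f(0)=0$; that hypothesis enters through $\bar g(0)=0$, i.e.\ through $h(0)=0$, which is what makes the constant function a solution. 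Your parenthetical concern about the rotation axis is shared by the paper's proof (its $\widetilde G$ also contains $\eta_\ve(\phi)$); the paper excludes axis points separately, immediately after the lemma, by a direct application of the maximum principle to $\Sigma_\g$ and the tangent horizontal slice, so neither proof is harmed.
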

\begin{proof}
  Suppose there exists $s_0\in I$ such that $t'(s_0)=0$. Since
  $\phi'(s)^2+t'(s)^2=1$, then we have $\phi'(s_0)=\pm 1$. In
  particular, $\phi'(s)\neq 0$ in some interval
  $J=(s_0-\delta,s_0+\delta)$. Thus, using $t't''+\phi'\phi''=0$, we
  get $k_1(s)=t''(s) \phi'(s)-t'(s) \phi''(s)=\frac{t''(s)}{\phi'(s)}$
  for any $s\in J$, if we consider $\Sigma_\g$ oriented
  by~\eqref{eq:normal}; and then the Weingarten surface
  equation~\eqref{eq:W} becomes
  \[
  \frac{t''+\phi't'\eta_\ve(\phi)}{2\phi'}=
  f\left(\frac{(t''-\phi't'\eta_\ve(\phi))^2}{4(\phi')^2}\right)
  \]
  in $J$.  This equation can be rewritten as $\widetilde
  G(\phi,\phi',t',t'')=0$, where
    \[
  \widetilde G(x,y,z,w):= \frac{w+yz\eta_\ve(x)}{2y}-
  f\left(\frac{(w-yz\eta_\ve(x))^2}{4y^2}\right).
  \]
  \begin{claim}\label{cl:G}
    $\widetilde G(x,y,z,w)$ is strictly increasing (resp. strictly
    decreasing) with respect to~$w$, when restricted to $\{y>0\}$
    (resp. $\{y<0\}$).
  \end{claim}
  \noindent
  Let us prove Claim~\ref{cl:G}.  A straightforward computation gives
  \[
  \frac{\partial \widetilde G}{\partial w}= \frac 1{2y}(1-2\a
  f'(\a^2)),\quad \mbox{where }\ \a=\frac{w-yz\eta_\ve(x)}{2y}.
  \]
  Since $f$ is elliptic, we get $1-2\a f'(\a^2)>0$, from where the
  claim follows.

  \medskip

  Since $\phi'(s_0)=\pm 1$, the claim above says $\frac{\partial
    \widetilde G}{\partial
    w}(\phi(s_0),\phi'(s_0),t'(s_0),t''(s_0))\neq 0$. Using the
  implicit function theorem, we get there exists a ${\mathcal C}^1$
  function $\widetilde\Upsilon$ from a neighborhood of
  $(\phi(s_0),\phi'(s_0),t'(s_0))$ in $\R^3$ to a neighborhood of
  $t''(s_0)$ in $\R$ such that $t''=\widetilde\Upsilon(\phi,\phi',t')$
  in $J$ (considering a smaller $\delta$ if necessary). If we set
  $v_1=\phi,v_2=\phi',v_3=t,v_4=t'$, the Weingarten surface equation
  $\widetilde G(\phi,\phi',t',t'')=0$ in $J$ becomes
  \begin{equation}\label{eq:system}
    \left\{\begin{array}{l}
        v_1'=v_2\\
        v_2'=-\frac{v_4}{v_2} \widetilde \Upsilon(v_1,v_2,v_4)\\
        v_3'=v_4\\
        v_4'=\widetilde \Upsilon(v_1,v_2,v_4).
      \end{array}\right.
  \end{equation}
  Given initial values $\phi(s_0)=\phi_0, \phi'(s_0)=\pm 1,
  t(s_0)=t_0, t'(s_0)=0$, Picard-Lindel\"{o}f theorem gives existence
  and uniqueness of a solution $(\phi,t)$ for~\eqref{eq:system} with
  these initial values at $s_0$, defined in $J$ (possibly taking a
  smaller $\delta$). By uniqueness,
  \[
  \phi(s)=\phi_0\pm(s-s_0)\quad\mbox{and}\quad t(s)=t_0,
  \]
  for any $s\in J$.  By the maximum principle,
  $\Sigma_\g\subset\M\times\{t_0\}$, a contradiction.
\end{proof}

From now on, we will assume $\Sigma_\g$ is a rotational special
Weingarten surface of minimal type in $\M_\ve\times\R$ which is not
contained in a horizontal slice, and $\Sigma_\g$ is oriented by the
unit normal vector field~\eqref{eq:normal}.

Since  $t'\neq 0$, using $t't''+\phi'\phi''=0$
(which holds because $(\phi')^2+(t')^2=1$) we get
\[
k_1(s)=-\frac{\phi''(s)}{t'(s)}
\]
for any $s\in I$. And then the Weingarten surface
equation~\eqref{eq:W} becomes
\begin{equation}\label{eq:W2}
  \frac{(t')^2\eta_\ve(\phi)-\phi''}{2t'}=
  f\left(\frac{((t')^2\eta_\ve(\phi)+\phi'')^2}{4(t')^2}\right) .
\end{equation}

\begin{lemma}
  \label{lem:cyl}
  Let $\Sigma_\g\subset\M_\ve\times\R$ be a rotational special
  Weingarten surface of minimal type as above.  If $\Sigma_\g$ lies in
  a vertical cylinder, then $\ve=1$ (i.e. $\M_\ve=\esf^2$) and such a
  vertical cylinder is $\{\phi=\pi/2\}$.
\end{lemma}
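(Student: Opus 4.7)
The plan is to exploit the sign constraint given by Proposition~\ref{prop:sign} together with the explicit formulas for the principal curvatures of a rotational surface.

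First I would translate the hypothesis into a statement about the generating curve: if $\Sigma_\g$ lies in a vertical cylinder, then the first three coordinates of the parameterization $F(s,\t)$ depend only on $\t$, so $\phi(s)\equiv\phi_0$ is constant. Then $\phi'(s)=\phi''(s)=0$, and the arc-length condition $(\phi')^2+(t')^2=1$ forces $t'(s)=\pm 1$ identically. Note also that $\phi_0$ must lie in the interior of $I_\ve$ (otherwise $S_\ve(\phi_0)=0$ and the ``cylinder'' degenerates to the rotation axis), so $\eta_\ve(\phi_0)$ is well defined.

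Next I would substitute these facts into the formulas
\[
k_1(s)=t''(s)\phi'(s)-t'(s)\phi''(s),\qquad k_2(s)=t'(s)\eta_\ve(\phi(s))
\]
to obtain $k_1\equiv 0$ and $k_2\equiv \pm\eta_\ve(\phi_0)$. In particular, the extrinsic curvature $K_e=k_1 k_2$ vanishes identically. Since $\Sigma_\g$ is a special Weingarten surface of minimal type, Proposition~\ref{prop:sign} applies and tells us that $K_e\equiv 0$ forces both principal curvatures to vanish. Therefore $\eta_\ve(\phi_0)=0$.

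The final step is to analyze this equation according to the two geometries. If $\ve=-1$, then $\eta_{-1}(\phi_0)=\coth(\phi_0)>0$ for every $\phi_0\in(0,+\infty)$, which is a contradiction; hence this case is impossible and we must have $\ve=1$, i.e.\ $\M_\ve=\esf^2$. If $\ve=1$, then $\eta_1(\phi_0)=\cot(\phi_0)=0$ and $\phi_0\in(0,\pi)$ forces $\phi_0=\pi/2$, so the cylinder is exactly $\{\phi=\pi/2\}$. There is essentially no obstacle here: once Proposition~\ref{prop:sign} is invoked the whole argument reduces to an elementary observation about $\cot$ versus $\coth$. The only thing to be slightly careful about is ruling out the degenerate value $\phi_0\in\partial I_\ve$, but this is automatic from the definition of a vertical cylinder of positive radius.
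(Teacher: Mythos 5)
Your proof is correct. The reduction to $\phi\equiv\phi_0$, $\phi'=\phi''=0$, $t'=\pm1$, hence $k_1\equiv 0$ and $k_2\equiv\pm\eta_\ve(\phi_0)$, is exactly the paper's starting point, and your handling of the degenerate boundary values of $\phi_0$ is a reasonable (if tacit in the paper) precaution. Where you diverge is in the key step: the paper substitutes directly into the Weingarten equation~\eqref{eq:W2} to get $\pm\frac{\eta_\ve(\phi_0)}{2}=f\bigl(\frac{\eta_\ve(\phi_0)^2}{4}\bigr)$ and then invokes Lemma~\ref{lem:increasing} (the only zero of $g(x)=x-f(x^2)$ is $x=0$), whereas you observe that $K_e=k_1k_2\equiv 0$ and invoke the ``moreover'' clause of Proposition~\ref{prop:sign} to conclude both principal curvatures vanish. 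The two routes are logically equivalent --- Proposition~\ref{prop:sign} is itself proved from Lemma~\ref{lem:increasing} by exactly the substitution the paper performs here --- so your version trades an explicit one-line computation for a citation of a packaged sign result; it is slightly more conceptual and avoids writing out the ODE, at the cost of leaning on a proposition whose content is the same monotonicity fact. Either way the conclusion $\eta_\ve(\phi_0)=0$, hence $\ve=1$ and $\phi_0=\pi/2$, follows as you state.
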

\begin{proof}
  If $\Sigma_\g$ is contained in a vertical cylinder, then
  $\phi(s)=\phi_0$ and $t(s)=\pm s$, for some $\phi_0\in
  I_\ve$. Equation~\eqref{eq:W2} becomes
  \[
  \pm \frac{\eta_\ve(\phi_0)}{2} =
  f\left(\frac{\eta_\ve(\phi_0)^2}{4}\right) .
  \]
  By Lemma \ref{lem:increasing} we know the only zero of the function
  $g(x)=x-f(x^2)$ is $x=0$. Thus it must hold $\eta_\ve(\phi_0)=0$,
  which is only possible when $\ve=1$ and $\phi_0=\pi/2$.
\end{proof}

By Proposition~\ref{prop:maximum}, $\Sigma_\g$ cannot touch the axis
of revolution $\{\phi=0\}$: note that if $\g$ cuts the axis, it must
be orthogonally (as $\Sigma_\g$ is a regular surface) and we reach a
contradiction by applying the maximum principle to $\Sigma_\g$ and the
corresponding horizontal slice.  Similarly, $\g$ can neither touch the
line $\{\phi=\pi\}$ in the case $\M_\ve=\esf^2$.

Proposition~\ref{prop:maximum} also says that, if:
\begin{itemize}
\item $\Sigma_\g$ has no boundary,
\item and $\Sigma_\g$ is not a horizontal slice, in the case
  $\M_\ve=\esf^2$,
\end{itemize}
then it cannot be compact, and the generating curve $\g$ must be
defined in $I=\R$.  Remark that in the case $\M_\ve=\esf^2$ it must be
$t(\R)=\R$, by Corollary~\ref{cor:half}.

\medskip

If we set
\begin{equation}\label{eq:G}
G(x,y,z):= \frac{y^2\eta_\ve(x)-z}{2y}
-f\left(\frac{(y^2\eta_\ve(x)+z)^2}{4y^2}\right),
\end{equation}
it is clear that \eqref{eq:W2} can be rewritten as
$G(\phi,t',\phi'')=0$.

Similarly as Claim~\ref{cl:G}, it can be proven the following one.
\begin{claim}\label{lem:G}
  $G(x,y,z)$ is strictly increasing (resp. strictly decreasing)
  with respect to the $z$ variable when restricted to $\{y<0\}$
  (resp. $\{y>0\}$).
\end{claim}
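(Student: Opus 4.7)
The plan is to mimic the proof of Claim~\ref{cl:G} by a direct computation of $\partial G/\partial z$, isolating the factor whose sign is controlled by ellipticity of $f$, and then reading off the monotonicity from the sign of $1/y$.

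First I would differentiate. Writing $\beta = \frac{y^2\eta_\ve(x)+z}{2y}$, so that the argument of $f$ in \eqref{eq:G} is exactly $\beta^2$, a direct calculation gives
\[
\frac{\partial G}{\partial z}(x,y,z)=-\frac{1}{2y}-\frac{y^2\eta_\ve(x)+z}{2y^2}\,f'(\beta^2)
=-\frac{1}{2y}\bigl(1+2\beta f'(\beta^2)\bigr),
\]
since $\frac{y^2\eta_\ve(x)+z}{y}=2\beta$.

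Next I would invoke ellipticity. By Lemma~\ref{lem:increasing}, the ellipticity assumption $4x(f'(x))^2<1$ for $x\geq 0$ is equivalent to $-1<2rf'(r^2)<1$ for every $r\in\R$; applied with $r=\beta$ this gives $1+2\beta f'(\beta^2)>0$. Hence the sign of $\partial G/\partial z$ coincides with the sign of $-1/(2y)$: it is strictly positive when $y<0$ and strictly negative when $y>0$, which is exactly the asserted monotonicity.

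I do not anticipate any real obstacle here: the only subtlety is to recognize that the composition $f(\cdot^2)$ is exactly what makes ellipticity give the sharp bound $|2\beta f'(\beta^2)|<1$ (as in the proof of Lemma~\ref{lem:increasing}), so that the quantity in parentheses above is strictly positive for \emph{all} admissible $(x,y,z)$, not merely almost everywhere. Everything else is a one-line differentiation that runs exactly parallel to the proof of Claim~\ref{cl:G}.
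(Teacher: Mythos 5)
Your computation is correct and is exactly the argument the paper intends: the paper dispatches this claim with ``Similarly as Claim~\ref{cl:G}'', whose proof is the same one-line differentiation giving $\frac{\partial G}{\partial z}=-\frac{1}{2y}\bigl(1+2\beta f'(\beta^2)\bigr)$ followed by the ellipticity bound $|2\beta f'(\beta^2)|<1$. Nothing is missing.
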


Fix $s_0\in I$. By the implicit function theorem and Claim~\ref{lem:G}
there exists a ${\mathcal C}^1$ function $\Upsilon$ defined in a
neighborhood of $(\phi(s_0),t'(s_0))$ in $\R^2$ such that
\[
\phi''=\Upsilon(\phi,t')
\]
in a neighborhood of $s_0$. If we set
$v_1=\phi,v_2=\phi',v_3=t,v_4=t'$, the Weingarten surface
equation~\eqref{eq:W2} is equivalent to the system
\begin{equation}\label{eq:system.2}
  \left\{\begin{array}{l}
      v_1'=v_2\\
      v_2'=\Upsilon(v_1,v_4)\\
      v_3'=v_4\\
      v_4'=-\frac{v_2}{v_4} \Upsilon(v_1,v_4).
      \end{array}\right.
\end{equation}
Given initial values $\phi(s_0), \phi'(s_0), t(s_0),t'(s_0)\in\R$,
with $\phi'(s_0)^2+t'(s_0)^2=1$ and $t'(s_0)\neq 0$,
Picard-Lindel\"{o}f Theorem gives existence and uniqueness of a
solution $(\phi,t)$ for~\eqref{eq:system.2} with the fixed initial
values at~$s_0$, defined in some interval $(s_0-\delta,s_0+\delta)$
where $t'\neq 0$.  Remark that $v_2 v_2'+v_4 v_4'=0$, and then
$(\phi')^2+(t')^2$ is constant. Since $\phi'(s_0)^2+t'(s_0)^2=1$, we
get $(\phi')^2+(t')^2=1$.

In the following Lemma, we use the uniqueness of such a solution
$(\phi,t)$ to get symmetries of the examples.

\begin{lemma}
  \label{symmetry}
  Let $\Sigma_\g\subset\M_\ve\times\R$ be a rotational special
  Weingarten surface of minimal type as above.
  Suppose there exists $s_0\in\R$ and $\delta>0$ (possibly
  $\delta=+\infty$) such that $\phi$ is defined in
  $I=(s_0-\delta,s_0+\delta)$ and $\phi'(s_0)=0$.  Then $\Sigma_\g$ is
  symmetric with respect to the horizontal slice $\{t=t(s_0)\}$; more
  precisely, $\phi(s)=\phi(2s_0-s)$ and $t(s)=2t(s_0)-t(2s_0-s)$, for
  any $s\in I$.
\end{lemma}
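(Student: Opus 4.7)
The plan is to construct a reflected pair $(\tilde\phi,\tilde t)$ that also solves the rotational Weingarten equation, with the same initial data as $(\phi,t)$ at $s_0$, and then conclude by uniqueness of the associated Cauchy problem for the system~\eqref{eq:system.2}. Concretely, I would set
\[
\tilde\phi(s):=\phi(2s_0-s),\qquad \tilde t(s):=2t(s_0)-t(2s_0-s),\qquad s\in I,
\]
which is well-defined because $I=(s_0-\delta,s_0+\delta)$ is symmetric about $s_0$. Differentiating gives $\tilde\phi'(s)=-\phi'(2s_0-s)$, $\tilde\phi''(s)=\phi''(2s_0-s)$, $\tilde t'(s)=t'(2s_0-s)$, so $(\tilde\phi')^2+(\tilde t')^2=1$ and the curve $\tilde\gamma(s)=(S_\ve(\tilde\phi(s)),0,C_\ve(\tilde\phi(s)),\tilde t(s))$ is again parameterized by arc length.

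The next step is to verify that $(\tilde\phi,\tilde t)$ satisfies~\eqref{eq:W2} and has the same initial data as $(\phi,t)$ at~$s_0$. Only $\phi$, $\phi''$ and $t'$ appear in~\eqref{eq:W2}, and from the identities $\tilde\phi(s)=\phi(2s_0-s)$, $\tilde\phi''(s)=\phi''(2s_0-s)$, $\tilde t'(s)=t'(2s_0-s)$ it follows that evaluating~\eqref{eq:W2} for $(\tilde\phi,\tilde t)$ at $s$ reduces literally to~\eqref{eq:W2} for $(\phi,t)$ at $2s_0-s$, which holds by hypothesis. For the initial data one has $\tilde\phi(s_0)=\phi(s_0)$, $\tilde t(s_0)=t(s_0)$, $\tilde t'(s_0)=t'(s_0)$, and crucially $\tilde\phi'(s_0)=-\phi'(s_0)=0=\phi'(s_0)$; this last equality is exactly where the hypothesis $\phi'(s_0)=0$ enters.

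By Lemma~\ref{lem:no0}, $t'\neq 0$ throughout $I$, hence also $\tilde t'\neq 0$, so the system~\eqref{eq:system.2} applies along both solutions. Picard--Lindel\"of then yields $(\tilde\phi,\tilde t)=(\phi,t)$ on some neighborhood of $s_0$, giving exactly the claimed identities $\phi(s)=\phi(2s_0-s)$ and $t(s)=2t(s_0)-t(2s_0-s)$ locally. To propagate the equality to all of $I$, I would run the standard open--closed argument: the subset of $I$ on which $(\phi,\phi',t,t')$ and $(\tilde\phi,\tilde\phi',\tilde t,\tilde t')$ coincide is nonempty, closed by continuity, and open by local uniqueness applied at every such point (the hypothesis $t'\neq 0$ on $I$ is what makes uniqueness available everywhere), and hence equals $I$. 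The only subtlety worth flagging is the invariance of~\eqref{eq:W2} under $s\mapsto 2s_0-s$, which works because $\phi'$ itself does not appear in the equation; beyond that, the argument is a textbook appeal to ODE uniqueness.
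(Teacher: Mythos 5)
Your proposal is correct and follows essentially the same route as the paper: define the reflected pair $\bigl(\phi(2s_0-s),\,2t(s_0)-t(2s_0-s)\bigr)$, check it satisfies the Weingarten ODE with the same Cauchy data at $s_0$ (this is where $\phi'(s_0)=0$ enters), and conclude by uniqueness. The only divergence is the globalization step, where the paper invokes the maximum principle while you run an open--closed argument using $t'\neq 0$ on all of $I$ (Lemma~\ref{lem:no0}); both are valid.
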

\begin{proof}
  Since $\phi'(s)^2+t'(s)^2=1$ and $\phi'(s_0)=0$, it follows
  $t'(s_0)=\pm 1$.  Define
  \[
  \psi(s)=\phi(2s_0-s)\qquad\mbox{ and }\qquad h(s)=2t(s_0)-t(2s_0-s)
  \]
  for $s\in I$.  Then $h'(s_0)=t'(s_0)=\pm1$.  In particular, $h'\neq
  0$ in a small neighborhood of $s_0$, where it is easy to check that
  $G(\psi,h',\psi'')=0$.  Since $\psi(s_0)=\phi(s_0)$,
  $\psi'(s_0)=\phi'(s_0)$, $h(s_0)=t(s_0)$ and $h'(s_0)=t'(s_0)$, we
  deduce from the uniqueness of solution that $\psi(s)=\phi(s)$ and
  $h(s)=t(s)$ locally around $s_0$.  The maximum principle gives the
  global equality.
\end{proof}

\begin{remark}\label{rem:sym}
  In the hypothesis of Lemma~\ref{symmetry}, if $\phi$ is defined in
  $(s_0-\delta_1,s_0+\delta_2)$ and $\phi'(s_0)=0$, then we can extend
  by symmetry the solution $(\phi,t)$ of the Weingarten surface
  equation to $(s_0-\delta_0,s_0+\delta_0)$, where
  $\delta_0=\max\{\delta_1,\delta_2\}$.
\end{remark}

The following Lemma shows that the function $\phi$ appearing in the
definition of the generating curve $\g$ has no horizontal asymptotes
when $\phi''$ has a sign.

\begin{lemma}\label{lem:asymp}
  Let $\Sigma_\g$ be a rotational special Weingarten surface of
  minimal type as above. Suppose that $\Sigma_\g$ is not contained in
  a vertical cylinder and either $\phi''(s)\leq 0$ for any $s\in I$
  or $\phi''(s)\geq 0$ for any $s\in I$.
  \begin{enumerate}
  \item If $I=(s_0,+\infty)$, then there exists $\lim_{s\to +\infty}
    \phi'(s)=\pm 1$.
  \item If $I=(-\infty,s_0)$, then there exists $\lim_{s\to -\infty}
    \phi'(s)=\pm 1$.
  \end{enumerate}
\end{lemma}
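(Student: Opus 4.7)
The approach is by contradiction. Since $\phi''$ has constant sign on $I=(s_0,+\infty)$, the function $\phi'$ is monotone; combined with $|\phi'|\leq 1$ from $(\phi')^2+(t')^2=1$, this yields the existence of $L:=\lim_{s\to+\infty}\phi'(s)\in[-1,1]$. The goal is to rule out $|L|<1$; part~(2) will then follow by the analogous argument after the change of variable $s\mapsto -s$.

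Assume $|L|<1$. Then $(t')^2\to 1-L^2>0$, and since $t'$ is continuous and never vanishes (Lemma~\ref{lem:no0}), it has constant sign on $I$ and converges to some $T\neq 0$ with $T^2=1-L^2$. Moreover $\int_{s_0}^{+\infty}\phi''(s)\,ds=L-\phi'(s_0)$ is finite, so the constant-sign function $\phi''$ lies in $L^1(s_0,+\infty)$. I would then analyze $\phi$: in $\H^2$, either $L\neq 0$ and $\phi\to+\infty$, or $L=0$ and $\phi$ is eventually monotone (because $\phi'$ has constant sign near $+\infty$) with limit $\phi_\infty\in[0,+\infty]$; in $\esf^2$, boundedness $\phi\in[0,\pi]$ forces $L=0$ and yields $\phi_\infty\in[0,\pi]$.

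Writing the Weingarten equation as $\phi''=\Upsilon(\phi,t')$ through the implicit function theorem applied to $G$ in~\eqref{eq:G}, continuity of $\Upsilon$ together with the existence of limits for $\phi$ and $t'$ gives that $\phi''(s)$ converges to $\Upsilon(\phi_\infty,T)$, and integrability of $\phi''$ with constant sign then forces this limit to equal $0$. Passing to the limit in~\eqref{eq:W2} and invoking Lemma~\ref{lem:increasing} (so that $g(r)=r-f(r^2)$ vanishes only at $r=0$), one obtains $\eta_\ve(\phi_\infty)=0$, or, when $\phi_\infty=+\infty$ (only possible in $\H^2$, where $\coth\phi\to 1$), the identity $g(T/2)=0$, forcing $T=0$. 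Both outcomes are impossible in $\H^2$ (since $\coth>0$ on $(0,+\infty)$ and $T\neq 0$), completing part~(1) there. In $\esf^2$ one is reduced to $\phi_\infty=\pi/2$; the boundary values $\phi_\infty\in\{0,\pi\}$ (and $\phi_\infty=0$ in $\H^2$) are ruled out separately by noting that the blow-up of $\eta_\ve(\phi)$ would drive $\phi''$ out of $L^1$ in view of the strict ellipticity $|f(Q)|<\sqrt{Q}$.

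The main obstacle is the remaining case in $\esf^2$ with $\phi_\infty=\pi/2$. Here I would linearize $\phi''=\Upsilon(\phi,t')$ around $(\phi,t')=(\pi/2,\pm1)$, eliminating $t'$ via $t'=\pm\sqrt{1-(\phi')^2}$: a short computation using $\eta_\ve'(\pi/2)=-1$ and the vanishing of the $f'$-contribution at argument $0$ gives $\Upsilon_\phi(\pi/2,\pm1)=-1$ and $\Upsilon_{t'}(\pi/2,\pm1)=0$, so that $u:=\phi-\pi/2$ satisfies $u''=-u+\mathrm{h.o.t.}$ If $\phi''\leq 0$ on $I$, then $\phi'$ decreases to $0$ from above, $\phi$ increases towards $\pi/2$ from below, so $u<0$ near infinity; but then $u''=-u+\mathrm{h.o.t.}>0$, contradicting $\phi''\leq 0$. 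The case $\phi''\geq 0$ is symmetric, with $\phi$ decreasing to $\pi/2$ from above and $u''<0$ against $\phi''\geq 0$. This closes part~(1) in $\esf^2$.
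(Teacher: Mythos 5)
Your proof runs parallel to the paper's for most of its length: existence of $L=\lim_{s\to+\infty}\phi'(s)$ by monotonicity, the contradiction hypothesis $|L|<1$ giving $t'\to T\neq 0$ of constant sign, forcing $\phi''\to 0$, passing to the limit in the Weingarten equation and invoking Lemma~\ref{lem:increasing} to get $\eta_\ve(\phi_\infty)=0$ (or $g(T/2)=0$ when $\phi\to+\infty$ in $\H^2$), which kills $\H^2$ and reduces to $\esf^2$ with $\phi\to\pi/2$. One caveat in this part: your claim that $\phi''(s)\to\Upsilon(\phi_\infty,T)$ presupposes that $\Upsilon$ is defined and continuous at the limit point, i.e.\ that $G(\phi_\infty,T,\cdot)$ has a zero, which is not free (it is essentially the content of conditions like~\eqref{condizione1}). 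This is easily bypassed: constant sign of $\phi''$ plus $\phi''\in L^1$ already yields a sequence $u_n\to+\infty$ with $\phi''(u_n)\to 0$ (the paper produces such a sequence with the mean value theorem), and passing to the limit in $G(\phi,t',\phi'')=0$ along that sequence only uses continuity of $G$, not of $\Upsilon$.

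The genuine gap is in your closing step for $\esf^2$ with $\phi\to\pi/2$. Setting $u=\phi-\pi/2$ and linearizing $\phi''=\Upsilon(\phi,t')$ at $(\pi/2,\pm1)$, the remainder is $o\bigl(|u|+|t'\mp1|\bigr)$, and $|t'\mp 1|=1-\sqrt{1-(\phi')^2}=O\bigl((u')^2\bigr)$. Your conclusion ``$u''=-u+\mathrm{h.o.t.}>0$ when $u<0$'' needs the remainder to be dominated by $|u|$, hence needs $(u')^2=O(|u|)$ near infinity; this does not follow from anything you have established ($u$ and $u'$ both tend to $0$, but a priori at unrelated rates). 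The estimate is in fact true --- multiply $u''=-u+R$ by $u'$, integrate on $(s,+\infty)$ and use that $u'$ has constant sign to get $(u')^2(s)+u^2(s)\leq 2\varepsilon\,\delta(s)\,|u(s)|$ with $\delta(s)\to 0$ --- but that is an additional argument you would have to supply before the sign contradiction is legitimate. The paper avoids all of this: once $\phi$ approaches $\pi/2$ monotonically from one side, $k_2=t'\cot\phi$ has a definite sign, and the constant sign of $\phi''$ gives $k_1=-\phi''/t'$ a definite sign; in each of the four cases the two signs agree, and Proposition~\ref{prop:sign} (which forces $k_1$ and $k_2$ to have opposite signs unless both vanish) then gives $k_1\equiv k_2\equiv 0$, contradicting that $\Sigma_\g$ is neither a slice nor the cylinder $\{\phi=\pi/2\}$. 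I would recommend replacing your linearization by this sign argument, or else inserting the energy estimate that justifies $(u')^2=o(|u|)$.
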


\begin{proof}
  In the case $\Sigma_\g$ is contained in a horizontal slice, then
  $\phi(s)=s$ and $t(s)=t_0$. Hence it is clear Lemma~\ref{lem:asymp}
  holds. Then suppose this is not the case.

  Let us first assume $I=(s_0,+\infty)$.  Since $\phi'$ is monotone
  and $\phi'^2(s)\leq 1$, there exists $\lim_{s\to +\infty}
  \phi'(s)\in[-1,1]$. Consider $s_1\in I$, and define $s_n=s_1+n-1\in
  I$, for any $n\in\N$.  By the intermediate value theorem, there
  exists $u_n\in(s_n,s_{n+1})$ such that
  \[
  \phi''(u_n)=\phi'(s_{n+1})-\phi'(s_n),
  \]
  which converges to zero as $n\to+\infty$. Hence either  $k_1(u_n)=-\frac{\phi''(u_n)}{t'(u_n)}\to 0$ 
  or $t'(u_n)\to 0$.   We will have
  finished the proof of {\it (1)} if we get $t'(u_n)\to 0$.

  Suppose $t'(u_n)\not\to 0$. Thus $k_1(u_n)\to 0$ and, by
  Proposition~\ref{prop:sign}, we deduce
  $k_2(u_n)=t'(u_n)\eta_\ve(\phi(u_n))\to 0$. This is not possible when
  $\M_\ve=\H^2$. Then it must be $\M_\ve=\esf^2$ and
  $\phi(u_n)\to\pi/2$. By the intermediate value theorem, there exists
  $v_n\in(u_{2n},u_{2n+2})$ such that
  \[
  (u_{2n+2}-u_{2n})\phi'(v_n)=\phi(u_{2n+2})-\phi(u_{2n}),
  \]
  which converges to zero. Since $u_{2n+2}-u_{2n}>
  s_{2n+2}-s_{2n+1}=1$, then we have $\phi'(v_n)\to 0$. This implies
  $\lim_{s\to +\infty} \phi'(s)=0$, and $\lim_{s\to +\infty}
  \phi(s)=\pi/2$.

  Remind that either $t'>0$ or $t'<0$.  If $\phi''\leq 0$ and $t'>0$,
  then $k_1=-\frac{\phi''}{t'}\geq 0$. On the other hand we have
  $\phi\leq\pi/2$, since $\phi''\leq 0$ and $\lim_{s\to +\infty}
  \phi(s)=\pi/2$. Then $k_2\geq 0$. By
  Proposition~\ref{prop:sign}, the only possibility is $k_1=k_2=0$
  identically in $I$. But this is not possible, as we are assuming
  $\Sigma_\g$ is not contained in a horizontal slice nor in the vertical
  cylinder $\{\phi=\pi/2\}$.

  The remaining three cases: $\phi''\leq 0$ and $t'<0$; $\phi''\geq 0$
  and $t'>0$; and $\phi''\geq 0$ and $t'<0$ follow analogously. This
  finishes the proof of item~{\it (1)}.

  Second item can be proved similarly.
\end{proof}


\subsection{Rotational special Weingarten surfaces of minimal type in
  ${\mathbb H}^2 \times \R$}
\label{sub:H2}

In \cite{FM},  first author studied rotational special Weingarten
surfaces of constant mean curvature type in $\H^2 \times \R$. He set
the necessary and sufficient conditions for the existence and
uniqueness of examples. When $f(0)>0$, we get embedded examples,
which either have the same geometric behaviour as the Delaunay
unduloid or they are vertical cylinders. If $f(0)<0$, the examples
we get are non-embedded examples, and they have a Delaunay nodoidal
type behaviour.

In the present paper we will prove that, when $f(0)=0$, the only
rotational special Weingarten surfaces of minimal type in $\H^2
\times \R$ are the horizontal slices $\H^2\times\{t_0\}$ and
properly embedded surfaces of catenoidal type.  In particular all
the examples are properly embedded.

\begin{lemma}\label{lem:sign}
  Let $\Sigma_\g$ be a rotational special Weingarten surface of
  minimal type in ${\mathbb H}^2 \times \R$, generated by
  $\g(s)=(S_\ve(\phi(s)), 0,C_\ve(\phi(s)), t(s))$, $s\in I$. If
  $\Sigma_\g$ is not contained in a horizontal
  slice, 
  then $\phi''> 0$. 
\end{lemma}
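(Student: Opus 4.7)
The plan is to exploit Proposition~\ref{prop:sign} (which forces $K_e\leq 0$) together with the explicit expressions of the principal curvatures in the rotational parameterization, and then invoke Lemma~\ref{lem:no0} to rule out the borderline case $\phi''=0$.

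First, since $\Sigma_\g$ is not contained in a horizontal slice, Lemma~\ref{lem:no0} gives $t'(s)\neq 0$ for all $s\in I$, so the principal curvatures can be written as
\[
k_1(s)=-\frac{\phi''(s)}{t'(s)},\qquad k_2(s)=t'(s)\coth(\phi(s)).
\]
Multiplying,
\[
K_e(s)=k_1(s)k_2(s)=-\phi''(s)\coth(\phi(s)).
\]
Recall that, as noted right after Lemma~\ref{lem:cyl}, the maximum principle forces $\g$ to stay away from the axis of revolution, so $\phi(s)>0$ on all of $I$; in particular $\coth(\phi(s))>0$.

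Now, by Proposition~\ref{prop:sign} the extrinsic curvature satisfies $K_e\leq 0$, hence $-\phi''\coth(\phi)\leq 0$ on $I$, and since $\coth(\phi)>0$, this yields $\phi''\geq 0$ everywhere on $I$.

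It remains to upgrade the inequality to a strict one. Suppose for contradiction that $\phi''(s_0)=0$ for some interior $s_0\in I$. Then $k_1(s_0)=0$ and therefore $K_e(s_0)=0$. The moreover part of Proposition~\ref{prop:sign} then forces $k_2(s_0)=0$ as well, i.e.\ $t'(s_0)\coth(\phi(s_0))=0$; since $\coth(\phi(s_0))>0$, this gives $t'(s_0)=0$, contradicting Lemma~\ref{lem:no0}. Hence $\phi''>0$ on $I$, as claimed. There is no real obstacle: the whole argument hinges on the observation that $K_e$ factors as $-\phi''\coth(\phi)$, after which sign constraints in $\H^2\times\R$ do all the work.
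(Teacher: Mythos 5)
Your proof is correct and uses essentially the same ingredients as the paper (the formulas for $k_1,k_2$, Proposition~\ref{prop:sign}, and Lemma~\ref{lem:no0}); the paper just compresses it into one step by observing $t'k_2=(t')^2\coth(\phi)>0$ and invoking the strict sign dichotomy to conclude $-\phi''=t'k_1<0$ directly, rather than first deriving $\phi''\geq 0$ from $K_e\leq 0$ and then excluding equality. One small caution: the ``moreover'' part of Proposition~\ref{prop:sign} is stated with the word \emph{identically}, so when you use it pointwise at $s_0$ you are really appealing to the pointwise trichotomy established inside its proof ($k_1(p)=k_2(p)$ iff both vanish at $p$), which does justify your step.
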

\begin{proof}
  By Lemma~\ref{lem:no0}, $t'\neq 0$.  Since
  $t'(s)k_2(s)=(t'(s))^2\coth(\phi(s))> 0$,
  Proposition~\ref{prop:sign} says that $-\phi''(s)=t'(s)k_1(s)<0$,
  from where Lemma~\ref{lem:sign} follows.
\end{proof}


\begin{theorem}\label{th:h2}
  Let $\Sigma_\g\subset{\mathbb H}^2\times\R$ be a rotational special
  Weingarten surface of minimal type without boundary, generated by
  $\g(s)=(S_\ve(\phi(s)),0,C_\ve(\phi(s)),t(s))$, for $s\in
  I\subset\R$.  Then $I=\R$ and either $\Sigma_\g$ is a horizontal
  slice or a catenoidal type surface whose functions $\phi,t$ satisfy:
  \begin{itemize}
  \item $\phi$ is a strictly convex function (i.e. $\phi''>0$), having
    a unique local minimum at some $s_0\in\R$ and
    $\lim_{s\to\pm\infty}\phi(s)=+\infty$.  Up to a
    reparameterization, we can assume $s_0=0$. Then
    $\phi(s)>\phi(0)>0$ for any $s\neq 0$. Moreover,
    $\phi(-s)=\phi(s)$, for any $s\in\R$.
  \item $t$ is a strictly monotone function. Up to a vertical
    translation of $\Sigma_\g$, we can assume $t(0)=0$. Then
    $t(-s)=-t(s)$, for any $s\in\R$.  Furthermore,
    $-t_\infty<t(s)<t_\infty$ for any $s\in\R$, where $t_\infty
    \in\R$ denotes $\lim_{s\to+\infty}t(s)$ if
    $t'>0$ or $\lim_{s\to-\infty}t(s)$ if $t'<0$.
  \end{itemize}
\end{theorem}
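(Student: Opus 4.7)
The plan is to combine the ODE analysis already set up in Lemmas~\ref{lem:no0},~\ref{lem:sign},~\ref{lem:asymp}, and~\ref{symmetry} with a first-integral argument. Throughout, I assume $\Sigma_\g$ is not a horizontal slice (otherwise the theorem holds trivially).

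\emph{Setting up the profile.} Since $\Sigma_\g$ has no boundary and $\g$ cannot meet the axis (apply Proposition~\ref{prop:maximum} to $\Sigma_\g$ and a horizontal slice, as in the discussion following Lemma~\ref{lem:cyl}), the arc-length parameter runs over $I=\R$. Lemma~\ref{lem:sign} yields $\phi''>0$, so $\phi'$ is strictly increasing; Lemma~\ref{lem:no0} together with $(\phi')^2+(t')^2=1$ gives $|\phi'|<1$; and Lemma~\ref{lem:asymp} forces $\lim_{s\to-\infty}\phi'(s)=-1$ and $\lim_{s\to+\infty}\phi'(s)=+1$ (the other sign combinations are incompatible with $\phi'$ strictly increasing through values in $(-1,1)$). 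By the intermediate value theorem there is a unique $s_0\in\R$ with $\phi'(s_0)=0$.

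\emph{Symmetries, strict minimum, monotonicity.} Shift $s$ so $s_0=0$ and vertically translate so $t(0)=0$. Lemma~\ref{symmetry} immediately gives $\phi(-s)=\phi(s)$ and $t(-s)=-t(s)$. Strict convexity and $\phi'(0)=0$ force a strict global minimum at $0$, with $\phi(0)>0$ since $\g$ avoids the axis; integrating $\phi'\to 1$ at $+\infty$ gives $\phi(s)\to+\infty$, and symmetry handles $s\to-\infty$. Since $t'$ is continuous on $\R$ and never vanishes, $t$ is strictly monotone; up to choice of normal we may take $t'(0)=1>0$.

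\emph{Boundedness of $t$ (the main obstacle).} The delicate remaining content is $t_\infty\in\R$. The plan is to derive the identity
\[
\frac{d}{ds}\bigl[\sinh(\phi)\,t'\bigr]=2H\sinh(\phi)\,\phi'
\]
by combining $2H=-\phi''/t'+t'\coth(\phi)$ with $\phi'\phi''+t't''=0$. In the classical minimal case $H\equiv 0$ this is the catenoid's first integral $\sinh(\phi)\,t'\equiv c$, forcing $t'\sim c\,e^{-\phi}$ and boundedness of $t$. In the Weingarten setting I would use the ellipticity estimate $|f(x)|\le\sqrt{x}$ (obtained by integrating $|f'(x)|<1/(2\sqrt{x})$ from $0$), which gives $|H|\le|k_1-k_2|/2$, and then perform an asymptotic analysis of equation~\eqref{eq:W2} as $\phi\to+\infty$: using the exponential decay $\coth(\phi)-1\sim 2e^{-2\phi}$, one shows that $1-\phi'$, and hence $t'=\sqrt{1-(\phi')^2}$, decays exponentially in $\phi$. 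Thus $t'\in L^1([0,+\infty))$ and $t_\infty=\int_0^{+\infty}t'\,ds\in\R$; the inequalities $-t_\infty<t(s)<t_\infty$ follow from strict monotonicity together with $t(-s)=-t(s)$. The hardest step is making this exponential-decay argument rigorous, and it is precisely the exponential approach of $\coth(\phi)$ to $1$ that is absent in $\esf^2\times\R$, explaining the qualitative contrast with Corollary~\ref{cor:half}.
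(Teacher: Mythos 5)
Your structural analysis is sound and matches the paper's: $I=\R$, $\phi''>0$ from Lemma~\ref{lem:sign}, the exclusion of horizontal asymptotes via Lemma~\ref{lem:asymp} forcing the strictly increasing $\phi'$ to sweep from $-1$ to $+1$ and hence to have a unique zero, and the symmetries from Lemma~\ref{symmetry}. One minor slip: ``up to choice of normal we may take $t'(0)=1$'' is not legitimate, since reversing the orientation changes the sign of $H$ but not of $H^2-K_e$, so the reoriented surface is special Weingarten for $-f$ rather than $f$; the cases $t'>0$ and $t'<0$ are genuinely distinct and must be treated separately (the paper does so, symmetrically).

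The serious gap is in the finiteness of $t_\infty$, which is the heart of the theorem. The estimate you invoke, $|f(x)|\le\sqrt{x}$, i.e.\ $|H|\le|k_1-k_2|/2$, is \emph{vacuous} here: by Proposition~\ref{prop:sign} the principal curvatures have opposite signs, and $|k_1+k_2|\le|k_1-k_2|$ holds automatically whenever $k_1k_2\le 0$. It therefore cannot drive any decay estimate, and the subsequent ``asymptotic analysis\dots one shows that $1-\phi'$ decays exponentially in $\phi$'' is precisely the step left unproved. The ingredient that actually works is the \emph{local} behavior of $f$ at $0$: since $f\in\mathcal{C}^1$ and $f(0)=0$, one has $|f(x^2)|\le|x|/2$ for $|x|<\delta$. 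Combined with $k_1,k_2\to 0$ (from $t'\to 0$, itself a consequence of $\phi'\to\pm1$), this gives, for $t'>0$ and $s$ large, $k_1+k_2=2f\bigl(\tfrac{(k_1-k_2)^2}{4}\bigr)\le\tfrac{k_2-k_1}{2}$, hence $k_1\le-\tfrac13k_2$, i.e.\ $\tfrac{t''}{t'}\le-\tfrac13\coth(\phi)\,\phi'\le-b<0$ eventually; integrating yields $t'\le c\,e^{-bs}$ and $t_\infty<+\infty$ (and symmetrically for $t'<0$). Note that neither your (correct) first integral $\tfrac{d}{ds}\bigl[\sinh(\phi)\,t'\bigr]=2H\sinh(\phi)\,\phi'$ nor the exponential rate of $\coth(\phi)-1$ is needed; only $\coth(\phi)\ge 1$ enters, which also shows this part of the argument does not by itself explain the contrast with $\esf^2\times\R$.
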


\begin{proof}
  First observe that $\g$ is defined in $\R$ because $\Sigma_\g$ has
  no boundary, since $\Sigma_\g$ cannot be compact. Suppose
  $\Sigma_\g$ is not a horizontal slice. Thus Lemma~\ref{lem:no0}
  ensures $t$ is strictly monotone.

  We get from Lemma~\ref{lem:sign} that $\phi''>0$, and then $\phi$ has no
  local maxima.  Moreover, as an application of the intermediate value
  theorem, we get by Lemma~\ref{lem:asymp} that $\phi$ cannot have a
  horizontal asymptote. Then $\phi$ has a unique local minimum at some
  $s_0 \in \R$ and $\lim_{s\to\pm\infty}\phi(s)=+\infty$.

  After a reparameterization, we can assume $s_0=0$. We can also
  assume $t(0)=0$, up to a vertical translation of~$\Sigma_\g$.
  Lemma~\ref{symmetry} says $\g$ is symmetric with respect to the
  horizontal slice $\H^2\times\{0\}$; more precisely,
  $\phi(-s)=\phi(s)$ and $t(-s)=-t(s)$, for any $s\in\R$.
  The existence of $t_\infty\in\R\cup\{+\infty\}$ and the wanted inequalities follow from
  the monotonicity of~$t$.

  It remains to prove $t_\infty<+\infty$. First we observe
  there exists $\delta>0$ small such that
  \[
  |f(x^2)|\leq \frac{|x|}{2},\quad \mbox{for any } |x|<\delta,
  \]
  because $f$ is a $\mathcal{C}^1$ function and $f(0)=0$.
  Since $(\phi')^2+(t')^2=1$, Lemma~\ref{lem:asymp} implies $t'(s)\to
  0$ as $s\to\pm\infty$. Then $k_2(s)=t'(s)\coth(\phi(s))\to 0$.  By
  Proposition~\ref{prop:sign}, we also get $k_1(s)\to 0$.  Hence, for
  $|s|$ big enough, we have 
  $\frac{|k_1(s)-k_2(s)|}{2}<\delta$,
  and then
  \begin{equation}\label{eq:desig}
  -\frac{|k_1(s)-k_2(s)|}{2}\leq
  k_1(s)+k_2(s)=2 f\left( \frac{(k_1(s)-k_2(s))^2}{4}\right)\leq
  \frac{|k_1(s)-k_2(s)|}{2}.
  \end{equation}

  In the case $t'(s)>0$, we have $k_1(s)<0<k_2(s)$ and then
  $|k_1(s)-k_2(s)|=k_2(s)-k_1(s)$.  From the second inequality
  in~\eqref{eq:desig}, we get $k_1(s)\leq
  \frac{-1}{3} k_2(s)$.  On the other hand, we get from
  Lemmas~\ref{lem:asymp} and~\ref{lem:sign} that
  $\phi'(s)\to 1$ as $s\to+\infty$.
  In particular, $\phi'(s)>0$ for $s$ large, and then
  $k_1(s)\leq  \frac{-1}{3} k_2(s)$ says
  \[
  t''(s)\leq \frac{-1}{3} t'(s)\coth(\phi(s)) \phi'(s).
  \]
  Since $\phi(s)\to+\infty$ as $s\to +\infty$, there exists
  a positive constant $b$ such that
  \[
  \frac{t''(s)}{t'(s)}\leq -b,\quad\mbox{for } s>s_0,
  \]
  where $s_0>0$ is large enough.  Integrating on
  $(s_0,s)$, we get $t'(s) \leq c e^{-bs}$, where $c=t'(s_0)e^{b s_0}$
  is a positive constant. Now integrating on $(s_0,+\infty)$ we get
  $t_\infty\leq\frac{t'(s_0)}{b} +t(s_0)<+\infty$, as we wanted to prove.

  Now suppose $t'(s)<0$.  Then  $k_1(s)>0>k_2(s)$ and, using the first inequality
  in~\eqref{eq:desig}, we obtain $k_1(s)\geq \frac{-1}{3} k_2(s)$.
  Arguing as above we get
  $\frac{t''(s)}{t'(s)}\leq -b$ for $s>s_0$, where $b$ is a positive
  constant and $s_0>0$ is large enough.
  Integrating on $(s_0,s),$ we obtain $t'(s)\geq -c
  e^{-bs}$, with $c$ a positive constant.  Finally, integrating on
  $(s_0,+\infty)$ we conclude that $-t_\infty=\lim_{s \to
    +\infty}t(s)>-\infty$.
    \end{proof}

%



\begin{remark}
  A minimal surface can be seen as a Weingarten surface for which $f$
  vanishes identically.  Theorem~\ref{th:h2}
  says in particular that any the rotational minimal surface of
  $\H^2\times\R$ lies in a horizontal slab. This fact was already
  obtained by B. Nelli, R. Sa Earp, W. Santos and E.
  Toubiana~\cite{NT}, Proposition 5.1.
\end{remark}

\begin{remark}
  In Theorems~\ref{th:exist1} and~\ref{th:exist2} we prove that, fixed
  the elliptic function $f$, there exists a unique rotational special
  Weingarten surface of minimal type in ${\mathbb H}^2 \times \R$ as
  in Theorem~\ref{th:h2} with $\phi(0)=\phi_0$, for any $\phi_0>0$
  satisfying some additional conditions (see
  Lemma~\ref{lem:-infinity}).
\end{remark}

\subsection{Rotational special Weingarten surfaces of minimal type in
  $\esf^2\times\R$}
\label{sub:S2} In \cite{FM}, the first author studied rotational
special Weingarten surfaces in ${\mathbb S}^2 \times \R$ with no
umbilical points.  He showed that the embedded examples are only
those who are strictly contained in $({\mathbb S}^2)^\pm \times \R,
$ where $({\mathbb S}^2)^\pm$ denotes the hemispheres of $\esf^2,$
and they are either vertical cylinders or Delaunay unduloidal type
surfaces.  Furthermore non-embedded examples exist only under the
condition $f(0)<0.$

In this subsection we will show that the rotational special Weingarten
surfaces of minimal type in $\esf^2\times\R$ are: the horizontal
slices $\esf^2 \times \{t_0\}$, the vertical cylinder $\{\phi=\pi/2\}$
and properly embedded surfaces of unduloidal type.  In particular they
are all properly embedded.

Throughout this subsection, $\Sigma_\g$ will denote a rotational
special Weingarten surface of minimal type in $\esf^2\times\R$,
obtained by rotating a curve
\[
\g(s)=(\sin(\phi(s)), 0,\cos(\phi(s)), t(s)) ,\quad s\in I,
\]
around the axis $\{(0,0,1)\}\times\R$. Assume $\Sigma_\g$ is not
contained in a horizontal slice nor in the cylinder $\{\phi=\pi/2\}$.

By Lemma~\ref{lem:no0}, either $t'>0$ or $t'<0$.  The following Lemma,
which can be proved arguing similarly as in Lemma~\ref{symmetry}, says
we do not lose generality assuming $t'>0$.
\begin{lemma}
  \label{another.solution}
  Let $\Sigma_\g$ be a rotational special Weingarten surface of
  minimal type as above, $\g$ defined in $I=(a,b)$, where
  $a\in[-\infty,+\infty)$ and $b\in(-\infty,+\infty]$.  Given $s_0\in
  I$, the functions
  \[
  \psi(s)=\pi-\phi(2s_0-s)\quad\mbox{ and }\quad v(s)=t(2s_0-s),
  \]
  for $s\in(2s_0-b,2s_0-a)$, also produce a rotational special
  Weingarten surface in the same hypothesis as $\Sigma_\g$.
\end{lemma}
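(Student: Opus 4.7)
The plan is to verify directly that $(\psi,v)$ solves the Weingarten ODE \eqref{eq:W2}, so that the curve
\[
\bar\g(s)=\bigl(\sin(\psi(s)),0,\cos(\psi(s)),v(s)\bigr),\quad s\in(2s_0-b,2s_0-a),
\]
generates, by revolution, a surface of the same type. Unlike Lemma~\ref{symmetry}, where the claim was that the transformed functions \emph{equal} $(\phi,t)$ and thus uniqueness of ODEs was needed, here I only need that $(\psi,v)$ is \emph{some} solution; this is a direct substitution.

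First I would compute the derivatives: $\psi'(s)=\phi'(2s_0-s)$, $\psi''(s)=-\phi''(2s_0-s)$, $v'(s)=-t'(2s_0-s)$, so that $(\psi')^2+(v')^2=(\phi')^2+(t')^2=1$, i.e.\ the arc-length condition is preserved. Note also that $v'\neq 0$ throughout the domain (because $t'\neq 0$ by Lemma~\ref{lem:no0}), and that the sign of $v'$ is opposite to that of $t'$, which is what makes the Lemma useful for reducing to the case $t'>0$.

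The main (in fact only) non-routine step is the substitution in \eqref{eq:W2}. The crucial ingredient is the identity
\[
\eta_1(\pi-x)=\cot(\pi-x)=-\cot(x)=-\eta_1(x),
\]
which is specific to $\ve=1$ (this is exactly why the analogous Lemma cannot be stated for $\H^2\times\R$, since $\coth$ satisfies no such identity on $(0,+\infty)$). Writing $\tilde s:=2s_0-s$ and using $\eta_1(\psi(s))=-\eta_1(\phi(\tilde s))$, I would compute
\[
\frac{(v')^2\eta_1(\psi)-\psi''}{2v'}
=\frac{-(t')^2\eta_1(\phi)+\phi''}{-2t'}\bigg|_{\tilde s}
=\frac{(t')^2\eta_1(\phi)-\phi''}{2t'}\bigg|_{\tilde s},
\]
and
\[
\frac{((v')^2\eta_1(\psi)+\psi'')^2}{4(v')^2}
=\frac{\bigl(-(t')^2\eta_1(\phi)-\phi''\bigr)^2}{4(t')^2}\bigg|_{\tilde s}
=\frac{((t')^2\eta_1(\phi)+\phi'')^2}{4(t')^2}\bigg|_{\tilde s}.
\]
Applying $f$ to the second expression and comparing, the equation $G(\psi,v',\psi'')=0$ becomes precisely $G(\phi,t',\phi'')=0$ evaluated at $\tilde s$, which holds by hypothesis.

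Finally I would verify that $(\psi,v)$ satisfies the standing hypotheses of the subsection: $\psi$ is not identically constant equal to $\pi/2$ (since $\phi\not\equiv\pi/2$, using $\psi=\pi/2\Leftrightarrow\phi(\tilde s)=\pi/2$), and $v$ is not constant (since $t$ is not), so the resulting surface is neither a horizontal slice nor the cylinder $\{\phi=\pi/2\}$. Geometrically the transformation corresponds to composing the rotational surface with the vertical reflection $(x_1,x_2,x_3,t)\mapsto(x_1,x_2,-x_3,t)$ followed by a reparameterization, which is an isometry of $\esf^2\times\R$; this observation could be used to shortcut the computation, but I prefer the explicit ODE verification since it makes transparent which feature of $\esf^2$ is being used.
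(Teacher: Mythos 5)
Your proof is correct and follows essentially the route the paper intends: the paper omits the argument, remarking only that it goes ``similarly as in Lemma~\ref{symmetry}'', and the relevant part of that argument is precisely your direct substitution showing $G(\psi,v',\psi'')=0$ via $\psi'=\phi'(2s_0-s)$, $\psi''=-\phi''(2s_0-s)$, $v'=-t'(2s_0-s)$ and $\cot(\pi-x)=-\cot x$ (indeed both principal curvatures are preserved pointwise). Your observations that uniqueness of the ODE is not needed here and that the identity for $\cot$ is the reason the lemma is specific to $\esf^2$ are accurate and welcome additions.
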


From now on, assume $t'>0$; i.e. $t$ is a strictly increasing
function.

\medskip

We saw in Lemma~\ref{lem:sign} that $\phi''$ never vanishes when
$\Sigma_\g\subset\H^2\times\R$. Next Lemma says that, if we are
working in $\esf^2\times\R$, then $\phi''$ only vanishes when the
surface cuts the vertical cylinder $\{\phi=\pi/2\}$.

\begin{lemma}\label{lem:signS2}
  Let $\Sigma_\g$ be a rotational special Weingarten surface as
  above. Assume $\Sigma_\g$ is not contained in a slice.  Then
  $\phi''(s)=0$ if, and only if, $\phi(s)=\pi/2$.
\end{lemma}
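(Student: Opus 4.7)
The plan is to translate the statement into a statement about the principal curvatures and then invoke the pointwise dichotomy hidden inside the proof of Proposition~\ref{prop:sign}. Since $\Sigma_\gamma$ is not a slice, Lemma~\ref{lem:no0} gives $t'(s)\neq 0$ at every interior point. Combined with the simplification carried out just after Lemma~\ref{lem:no0}, namely
\[
k_1(s)=-\frac{\phi''(s)}{t'(s)},\qquad k_2(s)=t'(s)\cot(\phi(s)),
\]
we get the two \emph{pointwise} equivalences
\[
\phi''(s)=0\ \Longleftrightarrow\ k_1(s)=0,\qquad \phi(s)=\tfrac{\pi}{2}\ \Longleftrightarrow\ k_2(s)=0.
\]
So it suffices to prove, at any fixed $s$, that $k_1(s)=0$ if and only if $k_2(s)=0$.

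The key input is the three-way dichotomy established in the proof of Proposition~\ref{prop:sign}: for every point $p\in\Sigma_\gamma$ one has $k_1(p)>k_2(p)\Leftrightarrow k_1(p)>0>k_2(p)$, $k_1(p)<k_2(p)\Leftrightarrow k_1(p)<0<k_2(p)$, and $k_1(p)=k_2(p)\Leftrightarrow k_1(p)=k_2(p)=0$. If at some $s$ we had, say, $k_1(s)=0$ and $k_2(s)\neq 0$, then the pair $(k_1(s),k_2(s))$ would fall in one of the strict-inequality cases, which force both curvatures to have strictly opposite signs (and in particular to be nonzero) — contradicting $k_1(s)=0$. By symmetry, $k_2(s)=0$ together with $k_1(s)\neq 0$ is equally impossible. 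Hence $k_1(s)=0\Leftrightarrow k_2(s)=0$ at every interior point.

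Combining this with the two equivalences from the first paragraph yields $\phi''(s)=0\Leftrightarrow \phi(s)=\pi/2$, which is exactly the statement.

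The only thing to be careful about is that Proposition~\ref{prop:sign} is stated as ``$K_e\equiv 0$ iff both principal curvatures vanish identically'', not pointwise; however the three bullet points in its proof are explicitly pointwise, and that is the content I am using. I expect no real obstacle beyond this minor extraction, since the signs of $k_1$ and $k_2$ are already controlled algebraically by the Weingarten equation~\eqref{eq:k} through the strictly increasing function $g(x)=x-f(x^2)$ vanishing only at $0$.
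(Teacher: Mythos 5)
Your proof is correct and follows essentially the same route as the paper: the paper also reduces $\phi''(s)=0$ to $k_1(s)=0$ via $k_1=-\phi''/t'$ with $t'\neq 0$, invokes Proposition~\ref{prop:sign} to pass to $k_2(s)=0$, and reads off $\phi(s)=\pi/2$ from $k_2=t'\cot(\phi)$. Your extra remark that the relevant content of Proposition~\ref{prop:sign} is the pointwise dichotomy from its proof is a fair point of care, but the paper uses it in the same pointwise way.
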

\begin{proof}
  Since $t'(s)>0$, then $\phi''(s)=0$ if, and only if,
  $k_1(s)=-\frac{\phi''(s)}{t'(s)}=0$. By Proposition~\ref{prop:sign},
  this is equivalent to $k_2(s)=0$, which only holds when
  $\phi(s)=\pi/2$, since $k_2(s)=t'(s)\cot(\phi(s))$.
\end{proof}

The following Lemma is crucial in the description of rotational
special Weingarten surfaces of minimal type in $\esf^2\times\R$
without boundary. Recall that, in the case $\Sigma_\g$ has no
boundary, then $I=\R$ and $t(\R)=\R$.

\begin{lemma}
  \label{lem:minmax}
  Let $\Sigma_\g$ be a rotational special Weingarten surface as
  above. Assume $\Sigma_\g$ has no boundary. Then there exist
  $s_1,s_2\in\R$, $s_1<s_2$, such that $s_1$ is a local minimum
  of~$\phi$, $s_2$ is a local maximum of $\phi$, and
  \[
  \phi(s)=\phi(2s_1-s)=\phi(2s_2-s)\quad\mbox{and}\quad
  t(s)=2t(s_1)-t(2s_1-s)=2t(s_2)-t(2s_2-s)
  \]
  for any $s\in\R$.  In particular, $\phi$ is a periodic function of
  period
  \[
  T=2(s_2-s_1) ,
  \]
  and we can choose $s_1,s_2$ such that $\phi'(s)>0$ for any
  $s\in(s_1,s_2)$ and $\phi'(s)<0$ for any $s\in(s_2,s_1+T)$.
\end{lemma}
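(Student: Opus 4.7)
The plan is to first locate some $s^*\in\R$ with $\phi(s^*)=\pi/2$, then to produce a strict local minimum $s_1$ and a strict local maximum $s_2$ of $\phi$ on either side of $s^*$, and finally to apply Lemma~\ref{symmetry} at both $s_1$ and $s_2$ and compose the two resulting reflections to read off the periodicity. I will use throughout the following refinement of Lemma~\ref{lem:signS2}: since $t'>0$, $k_2=t'\cot(\phi)$, $k_1=-\phi''/t'$, and Proposition~\ref{prop:sign} forces $k_1 k_2<0$ whenever $\phi\neq\pi/2$, one has $\phi''>0$ on $\{\phi<\pi/2\}$ and $\phi''<0$ on $\{\phi>\pi/2\}$.

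To produce $s^*$, I argue by contradiction. If $\phi<\pi/2$ everywhere, the sign observation makes $\phi$ strictly convex on $\R$, hence $\phi'$ is strictly increasing and $|\phi'|<1$, so both limits $\lim_{s\to\pm\infty}\phi'(s)$ exist. By Lemma~\ref{lem:asymp} they equal $\pm 1$, but this forces $\phi$ to be unbounded in some direction, contradicting $0<\phi<\pi/2$. The case $\phi>\pi/2$ everywhere is ruled out identically. So there exists $s^*\in\R$ with $\phi(s^*)=\pi/2$. Moreover $\phi'(s^*)\neq 0$: otherwise $t'(s^*)=1$, $k_2(s^*)=0$, and the Weingarten equation at $s^*$ reduces to $k_1/2=f(k_1^2/4)$, which by Lemma~\ref{lem:increasing} forces $k_1=0=\phi''(s^*)$; but then Picard--Lindel\"of applied to system~\eqref{eq:system.2} with initial data $(\pi/2,0,t(s^*),1)$ yields $\phi\equiv\pi/2$, i.e.\ the excluded cylinder.

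Assume now $\phi'(s^*)>0$ (the other sign is analogous; one then starts the following analysis at a nearby local minimum instead of at $s^*$). For $s$ just to the right of $s^*$ one has $\phi>\pi/2$ and thus $\phi''<0$, so $\phi'$ is strictly decreasing while it stays positive. It must reach $0$ at some first $s_2>s^*$, for otherwise $\phi$ would be increasing and bounded above by~$\pi$ on $(s^*,+\infty)$, so $\phi'(s)$ would tend to $0$, contradicting the $\pm 1$ limit in Lemma~\ref{lem:asymp}. At $s_2$, $\phi(s_2)>\pi/2$, so $\phi''(s_2)<0$ and $s_2$ is a strict local maximum. The mirror argument on the left produces a first $s_1<s^*$ with $\phi'(s_1)=0$ and $\phi(s_1)<\pi/2$, a strict local minimum, with $\phi'>0$ on $(s_1,s_2)$ by construction.

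Applying Lemma~\ref{symmetry} at $s_1$ and $s_2$ yields the reflection identities $\phi(s)=\phi(2s_i-s)$ and $t(s)=2t(s_i)-t(2s_i-s)$ for $i=1,2$. Writing $\sigma_i(s)=2s_i-s$, one has $\sigma_2\circ\sigma_1(s)=s+2(s_2-s_1)=s+T$, and composing the $\phi$-symmetries gives $\phi(s+T)=\phi(\sigma_2(\sigma_1(s)))=\phi(\sigma_1(s))=\phi(s)$ directly. Differentiating $\phi(s)=\phi(2s_2-s)$ yields $\phi'(s)=-\phi'(2s_2-s)$, which converts the positivity of $\phi'$ on $(s_1,s_2)$ into $\phi'<0$ on $(s_2,2s_2-s_1)=(s_2,s_1+T)$. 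The main technical obstacle lies in the initial step of producing $s^*$: Lemma~\ref{lem:signS2} only provides the location of the zeros of $\phi''$, so to exclude $\phi$ staying strictly on one side of $\pi/2$ one must combine the resulting strict global convexity or concavity with the boundedness $\phi\in(0,\pi)$ and with the sharp asymptotic $\phi'\to\pm 1$ of Lemma~\ref{lem:asymp}, which is the decisive ingredient.
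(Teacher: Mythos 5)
Your proof is correct and rests on the same two pillars as the paper's own argument: the incompatibility of the limit $\phi'\to\pm 1$ from Lemma~\ref{lem:asymp} with the boundedness $0\le\phi\le\pi$ (via the mean value theorem) to force the existence of critical points of $\phi$, and Lemma~\ref{symmetry} to turn the two extrema into reflections whose composition is the period-$T$ translation. The only difference is organizational: you first locate a crossing of $\{\phi=\pi/2\}$ and build adjacent extrema on either side of it, whereas the paper argues directly by contradiction assuming a local maximum or minimum is missing; both routes are sound.
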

\begin{proof}
  Once proved there exist $s_1,s_2\in\R$ such that $s_1$ is a local
  minimum of $\phi$ and $s_2$ is a local maximum of $\phi$, let us see
  we have finished Lemma~\ref{lem:minmax}:  By Lemma~\ref{symmetry},
  \[
  \phi(s)=\phi(2s_1-s)=\phi(2s_2-s)\quad\mbox{and}\quad
  t(s)=2t(s_1)-t(2s_1-s)=2t(s_2)-t(2s_2-s)
  \]
  for any $s\in\R$.  We deduce that $\phi$ is a periodic function of
  period $T=2|s_2-s_1|$ and
  \[
  \phi(s_1)\leq\phi(s)\leq\phi(s_2)
  \]
  for any $s\in\R$. By periodicity, we can choose $s_1<s_2$. And
  taking nearer ones, we can assume $\phi'(s)>0$ for any
  $s\in(s_1,s_2)$. Since $\phi$ is an even function with respect to
  $s_2$, we deduce $\phi'(s)<0$ for any $s\in(s_2,s_1+T)$. This proves
  Lemma~\ref{lem:minmax} under the assumption there exist a local
  maximum and a local minimum of $\phi$.

  Let us suppose there does not exist a local maximum or a local
  minimum of $\phi$. Then, as $0\leq\phi\leq\pi$, there exists
  $\lim_{s\to+\infty}\phi(s)$.  Moreover we know that $\phi$ is
  strictly monotone in $(s_0,+\infty)$, for $s_0\in\R$ big
  enough and we can assume $\phi(s)\neq\pi/2$ in $(s_0,+\infty)$.
  By Lemma~\ref{lem:signS2}, $\phi''$ does not change sign in
  $(s_0,+\infty)$.  Then Lemma~\ref{lem:asymp} ensures there
  exists $\lim_{s\to +\infty} \phi'(s)=\pm 1$.
  On the other hand, consider $x_1\in(s_0,+\infty)$, and define
  $x_n=x_1+n-1$, for any $n\in\N$.  By the intermediate value theorem,
  we get the existence of $u_n\in(x_n,x_{n+1})$ such that
  \[
  \phi'(u_n)=\phi(x_{n+1})-\phi(x_n).
  \]
  Since there exists $\lim_{s\to+\infty}\phi(s)\in[0,\pi]$, then
  $\phi'(u_n)$ converges to zero as $n\to+\infty$, in contradiction
  with $\lim_{s\to +\infty} \phi'(s)=\pm 1$.
\end{proof}

We know from Lemma~\ref{lem:minmax} that $\Sigma_\g$ is symmetric with
respect to the horizontal slices $\esf^2\times\{t(s_1)+n\widetilde
T\}$ and $\esf^2\times\{t(s_2)+n\widetilde T\}$ for any $n\in\N$,
where $\widetilde T=2(t(s_2)-t(s_1))$.

We deduce from the following Lemma that, if
$\partial\Sigma_\g=\emptyset$, then $\Sigma_\g$ cannot be contained in
$(\esf^2)^+\times\R$, where $(\esf^2)^+$ denotes a hemisphere of
$\esf^2$.

\begin{lemma}\label{lem:s2}
  Let $\Sigma_\g$ be a rotational special Weingarten surface as
  above. Assume $\Sigma_\g$ is not contained in the vertical cylinder
  $\{\phi=\pi/2\}$. If $s_0$ is a local minimum (resp.  a local
  maximum) of $\phi$, then $\phi(s_0)<\pi/2$ and $\phi''(s_0)>0$
  (resp. $\phi(s_0)>\pi/2$ and $\phi''(s_0)<0$).
\end{lemma}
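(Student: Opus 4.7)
The plan is to treat the local-minimum case directly; the local-maximum case will then follow either by rerunning the argument with reversed signs or by applying Lemma~\ref{another.solution} to reduce to a local minimum of the transformed profile $\pi-\phi$. Assume $s_0$ is a local minimum of $\phi$. Then $\phi'(s_0)=0$, and combined with $(\phi')^2+(t')^2=1$ and the running assumption $t'>0$, this gives $t'(s_0)=1$. Consequently
\[
k_1(s_0)=-\phi''(s_0), \qquad k_2(s_0)=\cot(\phi(s_0)),
\]
and $\phi''(s_0)\geq 0$ from the local-minimum hypothesis.

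The key step will be to upgrade $\phi''(s_0)\geq 0$ to a strict inequality. I will argue by contradiction: if $\phi''(s_0)=0$, then Lemma~\ref{lem:signS2} immediately forces $\phi(s_0)=\pi/2$. The idea is then to observe that the initial data $(\phi,\phi',t,t')(s_0)=(\pi/2,\,0,\,t(s_0),\,1)$ of $\Sigma_\g$ for the ODE system~\eqref{eq:system.2} coincide with those of the vertical cylinder $\Sigma_c=\{\phi=\pi/2\}$, parametrized by $\phi_c\equiv\pi/2$ and $t_c(s)=s-s_0+t(s_0)$, which is a bona fide solution of the same Weingarten equation by Lemma~\ref{lem:cyl}. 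The system~\eqref{eq:system.2} is well defined at $s_0$ since $t'(s_0)=1\neq 0$, so Picard-Lindel\"of uniqueness will force $\Sigma_\g=\Sigma_c$ in a neighborhood of $s_0$, and the maximum principle (Proposition~\ref{prop:maximum}) will propagate the equality globally, contradicting the hypothesis $\Sigma_\g\not\subset\{\phi=\pi/2\}$.

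Once $\phi''(s_0)>0$ is established, we get $k_1(s_0)<0$, and the pointwise trichotomy inside the proof of Proposition~\ref{prop:sign} will force $k_2(s_0)>0$, i.e.\ $\cot(\phi(s_0))>0$, equivalent to $\phi(s_0)<\pi/2$. For the local-maximum case, the same reasoning applied to the transformed surface provided by Lemma~\ref{another.solution}, or equivalently rerunning the argument with reversed signs (now $\phi''(s_0)\leq 0$, hence $k_1(s_0)\geq 0$, hence $k_2(s_0)\leq 0$), will yield $\phi''(s_0)<0$ and $\phi(s_0)>\pi/2$. The main obstacle is precisely the borderline case $\phi''(s_0)=0$: Lemma~\ref{lem:signS2} alone merely identifies $\phi(s_0)=\pi/2$ without yielding a contradiction, and one really needs to combine Picard-Lindel\"of uniqueness with the maximum principle to identify $\Sigma_\g$ with the excluded cylinder.
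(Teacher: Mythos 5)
Your proposal is correct and follows essentially the same route as the paper: both use $\phi'(s_0)=0$, $t'(s_0)=1$ to compute $k_1(s_0)=-\phi''(s_0)$ and $k_2(s_0)=\cot(\phi(s_0))$, invoke the sign trichotomy of Proposition~\ref{prop:sign}, and rule out the borderline case $\phi(s_0)=\pi/2$, $\phi''(s_0)=0$ by uniqueness of the initial value problem against the vertical cylinder (you merely reorder the steps, excluding $\phi''(s_0)=0$ first and then deducing $\phi(s_0)<\pi/2$, whereas the paper deduces $\phi(s_0)\le\pi/2$ first). A minor point in your favor: you correctly cite system~\eqref{eq:system.2} for the uniqueness step, which is the applicable one here since $t'(s_0)\neq 0$, while the paper's text refers to~\eqref{eq:system}.
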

\begin{proof}
  Suppose $s_0$ is a local minimum of $\phi$. Then $\phi'(s_0)=0$
  (thus $t'(s_0)=1$) and $\phi''(s_0)\geq 0$.  In particular,
  $k_1(s_0)=-\phi''(s_0)\leq 0$. By Proposition~\ref{prop:sign},
  $k_2(s_0)\geq 0$.  Since $k_2(s_0)=\cot(\phi(s_0))$, we obtain
  $\phi(s_0)\leq\pi/2$, and the equality holds if, and only if,
  $\phi''(s_0)=0$, as we already remarked in Lemma~\ref{lem:signS2}.

  It cannot be $\phi(s_0)=\pi/2$ since, by uniqueness of the
  corresponding initial value problem~\eqref{eq:system}, $\phi$ would
  be constantly $\pi/2$. But we are assuming this is not the case.
  Then $\phi(s_0)<\pi/2$, and $\phi''(s_0)>0$.

  Following the same arguments we obtain $\phi(s_0)>\pi/2$ and
  $\phi''(s_0)<0$ when $s_0$ is a local maximum of~$\phi$. This
  finishes the proof of Lemma~\ref{lem:s2}.
\end{proof}

As a conclusion of all these previous Lemmas, we get the following
description of examples without boundary.

\begin{theorem}
  \label{th:s2}
  Let $\Sigma_\g$ be a rotational special Weingarten surface of
  minimal type without boundary in $\esf^2\times\R$, obtained by
  rotating a curve $\g(s)=(\sin(\phi(s)), 0,\cos(\phi(s)), t(s))$,
  $s\in I\subset\R$, around the axis $\{(0,0,1)\}\times\R$. Then
  $\Sigma_\g$ is a horizontal slice $\esf^2\times\{t_0\}$ or $I=\R$
  and either $\Sigma_\g$ is the vertical cylinder $\{\phi=\pi/2\}$ or
  an unduloidal type surface whose functions $\phi,t$ satisfy:
  \begin{itemize}
  \item There exist $s_1,s_2\in\R$, $s_1<s_2$, such that $\phi$ is a
    periodic function of period $T=2(s_2-s_1)$,
    $\phi(s_1)<\pi/2<\phi(s_2)$ and
    \[
    \phi(s_1)<\phi(s)<\phi(s_2)
    \]
    for any $s\in(s_1,s_1+T)$, $s\neq s_2$. Moreover, $\phi$ is an
    symmetric function with respect to $s_1,s_2$, i.e.
    \[
    \phi(s_1+s)=\phi(s_1-s)\quad\mbox{ and }\quad
    \phi(s_2+s)=\phi(s_2-s)
    \]
    for any $s\in[0,T)$.  Finally, there exists a unique
    $s_3\in(s_1,s_2)$ such that $\phi(s_3)=\pi/2$ and it holds
    $\phi''(s_3)=\phi''(2s_1-s_3)=0$, $\phi''>0$ in $(2s_1-s_3,s_3)$
    and $\phi''<0$ in $s\in(s_3,2s_2-s_3)$.
  \item The function $t$ is strictly monotone and
    $t(s)=2t(s_1)-t(2s_1-s)=2t(s_2)-t(2s_2-s)$ for any $s\in\R$. In
    particular,
    \[
    t(s+nT)=t(s)+n\widetilde T,\quad\mbox{for any }\ n\in\N,
    \]
    where $\widetilde T=2(t(s_2)-t(s_1))$.
  \end{itemize}
\end{theorem}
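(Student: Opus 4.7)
The plan is to assemble the theorem directly from the chain of lemmas already established in this subsection; almost all the substantive work has been done. First I would dispose of the degenerate possibilities: if $\Sigma_\g$ is contained in a horizontal slice, we are done; if it lies in a vertical cylinder, then by Lemma~\ref{lem:cyl} that cylinder must be $\{\phi=\pi/2\}$. Assume henceforth $\Sigma_\g$ is neither. Since $\partial\Sigma_\g=\emptyset$ and, as remarked before system~\eqref{eq:system.2}, such a $\Sigma_\g$ cannot be compact, we have $I=\R$. By Lemma~\ref{another.solution} we may further assume $t'>0$.

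Next I would invoke Lemma~\ref{lem:minmax} to produce $s_1<s_2$ with $s_1$ a local minimum of $\phi$, $s_2$ a local maximum, $\phi'>0$ on $(s_1,s_2)$ and $\phi'<0$ on $(s_2,s_1+T)$, $T=2(s_2-s_1)$, together with the symmetries $\phi(s)=\phi(2s_i-s)$ and $t(s)=2t(s_i)-t(2s_i-s)$ for $i=1,2$. Lemma~\ref{lem:s2} then supplies $\phi(s_1)<\pi/2<\phi(s_2)$ and $\phi''(s_1)>0>\phi''(s_2)$, and the strict inequalities $\phi(s_1)<\phi(s)<\phi(s_2)$ on $(s_1,s_1+T)\setminus\{s_2\}$ follow at once from the strict monotonicity of $\phi$ on each of the two subintervals.

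Existence of $s_3\in(s_1,s_2)$ with $\phi(s_3)=\pi/2$ follows from the intermediate value theorem, and uniqueness from strict monotonicity on $(s_1,s_2)$. Lemma~\ref{lem:signS2} immediately gives $\phi''(s_3)=0$, and differentiating the symmetry $\phi(s)=\phi(2s_1-s)$ twice yields $\phi''(2s_1-s_3)=\phi''(s_3)=0$. To pin down the signs of $\phi''$ I would argue: on $(s_1,s_3)$ we have $\phi<\pi/2$, so Lemma~\ref{lem:signS2} excludes any zero of $\phi''$ there, and continuity together with $\phi''(s_1)>0$ forces $\phi''>0$ on $(s_1,s_3)$; the symmetry $\phi''(s)=\phi''(2s_1-s)$ then promotes this to $(2s_1-s_3,s_3)$. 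The analogous argument centered at $s_2$ yields $\phi''<0$ on $(s_3,2s_2-s_3)$.

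For the function $t$: strict monotonicity follows from Lemma~\ref{lem:no0} together with our convention $t'>0$, and the reflection symmetries are part of Lemma~\ref{lem:minmax}. Quasi-periodicity is then a short computation: using the symmetry around $s_2$,
\[
t(s+T)=2t(s_2)-t(2s_2-s-T)=2t(s_2)-t(2s_1-s)=t(s)+\widetilde T,
\]
and induction yields $t(s+nT)=t(s)+n\widetilde T$ for all $n\in\N$. I do not expect any serious obstacle here; the only slightly delicate point is the careful bookkeeping that combines the two reflection symmetries (around $s_1$ and around $s_2$) to extend the sign information on $\phi''$ from the half-intervals $(s_1,s_3)$ and $(s_3,s_2)$ to the full intervals stated in the theorem.
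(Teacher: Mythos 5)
Your proposal is correct and follows essentially the same route as the paper, which presents Theorem~\ref{th:s2} precisely ``as a conclusion of all these previous Lemmas'' (Lemmas~\ref{another.solution}, \ref{lem:signS2}, \ref{lem:minmax} and~\ref{lem:s2}) without writing out further details. Your assembly of those lemmas, including the sign analysis of $\phi''$ via Lemma~\ref{lem:signS2} and the quasi-periodicity computation for $t$, is exactly the intended argument.
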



\section{Existence of examples in $\M_\e \times \R$}
\label{sec:sufficient}

Let $\Sigma_\g$ denote a rotational special Weingarten surface
without boundary of minimal type in $\M_\ve\times\R$, $\ve=\pm 1$,
obtained by rotating a curve
\[
\g(s)=(S_\ve(\phi(s)), 0,C_\ve(\phi(s)), t(s)),\quad s\in I\subset\R ,
\]
around the axis $\{(0,0,1)\}\times\R$.  Assume $t$ is a non-constant
function (i.e. $\Sigma_\g$ is not contained in a horizontal slice)
and $s$ is the arc-length parameter (i.e. $\phi'(s)^2+t'(s)^2=1$).
We know by Lemma~\ref{lem:no0} that either $t'>0$ or $t'<0$.  As
$\Sigma_\g$ is a Weingarten surface and $t'$ never vanishes,
$G(\phi,t',\phi'')=0$, where $G$ was defined in~\eqref{eq:G}. Also
we know by Theorems~\ref{th:h2} and~\ref{th:s2} that, when
$\partial\Sigma_\g\neq\emptyset$, there exists a (global) minimum
$s_0\in I=\R$ of $\phi$.

\begin{lemma}
\label{lem:-infinity}
Let $\Sigma_\g\subset\M_\ve\times\R$ denote a rotational special
Weingarten surface as above. Assume there is a local minimum $s_0\in
I$ of $\phi$, and denote $\phi_0=\phi(s_0)$.
\begin{enumerate}
\item If $t'>0$, then $\lim_{r \to -\infty} (r-f(r^2))<-\eta_\ve(
  \phi_0)$.\medskip
\item If $t'<0$, then $\lim_{r\to+\infty}(r-f(r^2))>\eta_\ve(\phi_0)$.
\end{enumerate}
\end{lemma}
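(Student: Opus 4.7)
The plan is to evaluate the Weingarten equation at the local minimum $s_0$ of $\phi$ and then use the strict monotonicity of $g(x)=x-f(x^2)$ together with its finite limits at $\pm\infty$ from Corollary~\ref{cor:limit}.

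At the local minimum $s_0 \in I$ of $\phi$, we have $\phi'(s_0)=0$, so the unit-speed relation $\phi'(s)^2+t'(s)^2=1$ forces $t'(s_0)=\pm 1$, with the sign matching that of $t'$ on $I$ (which is well-defined by Lemma~\ref{lem:no0}). The formulas $k_1=-\phi''/t'$ and $k_2=t'\eta_\ve(\phi)$ established just before~\eqref{eq:W2} then yield, at $s_0$: in case~(1), $k_1(s_0)=-\phi''(s_0)$ and $k_2(s_0)=\eta_\ve(\phi_0)$; in case~(2), $k_1(s_0)=\phi''(s_0)$ and $k_2(s_0)=-\eta_\ve(\phi_0)$. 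Since $s_0$ is a local minimum, $\phi''(s_0)\geq 0$ in either case.

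I would then specialize the second identity of~\eqref{eq:k}, namely $g((k_1-k_2)/2)=-k_2$, at $s_0$. In case~(1) this becomes
\[
g\!\left(\frac{-\phi''(s_0)-\eta_\ve(\phi_0)}{2}\right)=-\eta_\ve(\phi_0),
\]
while in case~(2) it becomes
\[
g\!\left(\frac{\phi''(s_0)+\eta_\ve(\phi_0)}{2}\right)=\eta_\ve(\phi_0).
\]
By Lemma~\ref{lem:increasing}, $g$ is strictly increasing; by Corollary~\ref{cor:limit}, $\ell_{-\infty}\in[-\infty,0)$ and $\ell_{+\infty}\in(0,+\infty]$. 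Hence $\ell_{-\infty}<g(u)<\ell_{+\infty}$ for every $u\in\R$, and applying this to the finite arguments above gives $\ell_{-\infty}<-\eta_\ve(\phi_0)$ in case~(1) and $\eta_\ve(\phi_0)<\ell_{+\infty}$ in case~(2), which are exactly the claimed inequalities.

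I do not expect a substantive obstacle: the proof reduces to a pointwise evaluation at $s_0$ combined with the general monotonicity and limit properties of $g$ already in hand. The only mild bookkeeping is tracking the sign flips of $k_1$ and $k_2$ when $t'$ switches sign, which is handled transparently by treating the two cases separately through the same form of~\eqref{eq:k}.
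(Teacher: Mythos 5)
Your proof is correct and follows essentially the same route as the paper's: both evaluate the Weingarten equation at the local minimum $s_0$ (the paper writes it as $0=G(\phi_0,\pm 1,\phi''_0)=\pm\eta_\ve(\phi_0)+r_0-f(r_0^2)$ with $r_0=\mp\frac{\eta_\ve(\phi_0)+\phi''_0}{2}$, which is exactly your identity $g\bigl(\tfrac{k_1-k_2}{2}\bigr)=-k_2$ at $s_0$) and then conclude by the strict monotonicity of $g(r)=r-f(r^2)$ from Lemma~\ref{lem:increasing}. The only cosmetic difference is that you phrase the evaluation via the principal curvatures and equation~\eqref{eq:k} rather than via the function $G$ of~\eqref{eq:G}.
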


\begin{proof}
  Since $s_0$ is a local minimum of $\phi$, then $\phi'(s_0)=0$,
  $t'(s_0)=\pm 1$ and $\phi''_0:=\phi''(s_0)\geq 0$.

  If $t'>0$, we have $t'(s_0)=1$.  Therefore,
  \[
  0=G(\phi_0,1,\phi''_0)=\eta_\e(\phi_0)+r_0-f({r_0}^2) ,
  \]
  where $r_0=-\frac{\eta_\e(\phi_0)+\phi''_0}2<0$.  We know, by
  Lemma~\ref{lem:increasing}, that the function $r-f(r^2)$ is strictly
  increasing in $r$, and then
  \[
  \lim_{r \to -\infty } (r-f(r^2))< r_0-f({r_0}^2)=- \eta_\e( \phi_0).
  \]
  This finishes item {\it (1)}.

  Suppose now $t'<0$, and thus $t'(s_0)=-1$. Arguing as above, we get
  \[
  0=G(\phi_0,-1,\phi''_0)=-\eta_\e(\phi_0)+\bar r_0-f({\bar r_0}^2) ,
  \]
  where $\bar r_0=\frac{\eta_\e(\phi_0)+\phi''_0}2>0$, and then
  \[
  \eta_\e(\phi_0)= \bar r_0-f({\bar r_0}^2) <\lim_{r\to
    +\infty}(r-f(r^2)).
    \]
\end{proof}

\begin{theorem}
  \label{th:exist1}
  Let $f\in {\mathcal C}^1([0,+\infty))$ be an elliptic function
  (i.e. $4x(f'(x))^2<1$ for any $x\geq 0$) such that $f(0)=0$.
  Given $\phi_0>0$ when $\e=-1$ or $\phi_0\in(0,\pi/2)$ when $\e=1$,
  satisfying
  \begin{equation}
    \label{condizione1}
    \eta_\e(\phi_0)<-\lim_{r\to -\infty}(r-f(r^2)),
  \end{equation}
  there exists a unique rotational special Weingarten surface 
  without boundary of minimal type in $\M_\ve\times\R$, obtained by
  rotating a curve $\g(s)=(S_\ve(\phi(s)), 0, C_\ve(\phi(s)), t(s))$,
  with $s\in\R$, parameterized by arc-length and such
  that $t(0)=0$, $t'>0$ and $\phi$ takes its minimum value $\phi_0$ at
  $s=0$.
\end{theorem}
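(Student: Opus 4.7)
The plan is to construct the surface as the solution of an initial value problem for the Weingarten ODE at the minimum $s = 0$ of $\phi$, and then to show the local solution extends to all of $\R$. First I would determine $\phi''(0)$ from condition~\eqref{condizione1}: imposing $\phi(0) = \phi_0$, $\phi'(0) = 0$, $t(0) = 0$ and $t'(0) = 1$ (forced by arc-length and $t' > 0$), the equation $G(\phi_0, 1, \phi''(0)) = 0$ from~\eqref{eq:G} reduces to $g(r_0) = -\eta_\e(\phi_0)$, where $g(r) = r - f(r^2)$ and $r_0 = -(\eta_\e(\phi_0) + \phi''(0))/2$. Lemma~\ref{lem:increasing} makes $g$ continuous and strictly increasing with $g(0) = 0$ and $\lim_{r \to -\infty} g(r) = \ell_{-\infty}$, so hypothesis~\eqref{condizione1} places $-\eta_\e(\phi_0)$ in $(\ell_{-\infty}, 0)$, yielding a unique $r_0 < 0$ and hence a unique $\phi''(0)$. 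Claim~\ref{lem:G} together with the implicit function theorem then supplies a $C^1$ function $\Upsilon$ with $\phi'' = \Upsilon(\phi, t')$ near $(\phi_0, 1)$, so system~\eqref{eq:system.2} with initial data $(\phi_0, 0, 0, 1)$ falls under Picard--Lindel\"of and produces a unique local solution on a maximal interval $(-b_-, b_+) \ni 0$ with $(\phi')^2 + (t')^2 \equiv 1$. Because $\phi'(0) = 0$, Lemma~\ref{symmetry} and Remark~\ref{rem:sym} automatically give $\phi(-s) = \phi(s)$ and $t(-s) = -t(s)$, so it suffices to prove $b_+ = +\infty$.

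The hard part is this global extension. Since $|\phi'|, |t'| \leq 1$, the quantities $\phi, \phi', t, t'$ extend continuously to $s = b_+$ whenever $b_+ < +\infty$, and the continuation principle reduces matters to excluding two degenerations at $s = b_+$: (i) $t'(s) \to 0$ along some sequence $s_n \to b_+$, or (ii) $\phi(s)$ reaching $\partial I_\e$. To rule out (i), I would pass to a subsequence so the phase point converges to $(\phi^\ast, \pm 1, t^\ast, 0)$; at such limit data one can switch to the alternative system~\eqref{eq:system}, in which $\phi'$ rather than $t'$ is the nonvanishing coordinate, and apply Picard--Lindel\"of as in the proof of Lemma~\ref{lem:no0} to conclude that the unique local solution through the limit data is a vertical line $\phi(s) = \phi^\ast \pm (s - b_+)$, $t \equiv t^\ast$, contradicting the strict monotonicity of $t$ on $[0, b_+)$. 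To rule out (ii), I would invoke monotonicity: in the hyperbolic case Lemma~\ref{lem:sign} gives $\phi'' > 0$ throughout, so $\phi_0 \leq \phi(s) \leq \phi_0 + b_+$ remains inside $(0, +\infty)$; in the spherical case Lemma~\ref{lem:signS2} together with $\phi''(0) > 0$ (from $\phi_0 < \pi/2$) keeps $\phi$ strictly inside $(0, \pi)$ at least up to the next local maximum, after which Lemma~\ref{symmetry} permits further extension. In either setting $\phi$ remains in a compact subinterval of $I_\e^\circ$, contradicting $b_+ < +\infty$.

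Uniqueness then follows directly from Picard--Lindel\"of: any other solution satisfying the hypotheses arrives at $s = 0$ with the same data $(\phi_0, 0, 0, 1)$, so the two solutions coincide on their common maximal interval, which is all of $\R$.
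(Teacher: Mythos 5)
Your proposal follows the paper's own proof almost step for step: the same determination of $\phi''(0)$ from hypothesis~\eqref{condizione1} and the monotonicity of $r\mapsto r-f(r^2)$ (the paper phrases this as monotonicity of $G(\phi_0,1,\cdot)$ in the third variable, via Claim~\ref{lem:G}), the same local solvability of system~\eqref{eq:system.2} by Picard--Lindel\"of, the same reduction via Lemma~\ref{symmetry} to showing the maximal interval is all of $\R$, and the same uniqueness argument. The one place you genuinely diverge is in excluding the degeneration $t'\to 0$ at a finite endpoint $a$ of the maximal interval: the paper applies the boundary maximum principle to the resulting surface-with-boundary tangent to the slice $\M_\ve\times\{t_a\}$, whereas you rerun the argument of Lemma~\ref{lem:no0} at the endpoint, switching to system~\eqref{eq:system} and using backward uniqueness from the limit data $(\phi_a,\pm 1,t_a,0)$; both work, and yours stays purely at the ODE level. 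You are also more explicit than the paper about the other possible degeneration, $\phi$ reaching $\partial I_\ve$ (the paper tacitly assumes $\phi_a$ remains interior so the IVP can be re-posed at $s=a$); your hyperbolic-case argument via $\phi''>0$ is complete, though in the spherical case the claim that concavity above $\pi/2$ produces a local maximum before $\phi$ reaches $\pi$ needs a further word (a concave increasing function can still reach $\pi$ in finite arc-length) --- a point the paper's proof does not address either.
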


\begin{proof}
  Picard-Lindel\"{o}f Theorem ensures there is a unique solution
  $(\phi,t)$ to the initial value problem~\eqref{eq:system.2} with
  $\phi(0)=\phi_0$, $\phi'(0)=0$, $t(0)=0$ and $t'(0)=1$; $(\phi,t)$
  defined in $(-s_1,s_1)$, for some $s_1>0$.

  In particular, $G(\phi_0,1,\phi''(0))=0$ (where $G$ was defined
  in~\eqref{eq:G}).  By Claim~\ref{lem:G}, $G(\phi_0,1,z)$ is strictly
  decreasing with respect to the $z$ variable. Moreover,
  Lemma~\ref{lem:increasing} assures
  $G(\phi_0,1,0)=\frac{\eta_\ve(\phi_0)}2-f
  \left(\frac{\eta_\ve(\phi_0)^2}4\right)>0$, since
  $\eta_\ve(\phi_0)>0$, and
  \[
  \lim_{z\to +\infty}G(\phi_0,1,z) = \lim_{z\to +\infty}
  \left(\eta_\ve(\phi_0)+\frac{(-z-\eta_\ve(\phi_0))}2-
    f\left(\frac{(-z-\eta_\ve(\phi_0))^2}4\right)\right)
  \]
  \[
  =\eta_\ve(\phi_0)+\lim_{r\to -\infty}(r-f(r^2))<0
  \]
  by~\eqref{condizione1}. Hence there exists a unique $\phi''_0\in\R$
  such that $G(\phi_0,1,\phi''_0)=0$ (so it must be
  $\phi''(0)=\phi''_0$) and $\phi''_0>0$. This says $\phi$ has a local
  minimum at $s=0$.

  By Lemma~\ref{symmetry}, the maximal interval of definition of the
  solution $(\phi,t)$ is of the kind $(-a,a)$, for some
  $a\in(0,+\infty]$ (see Remark~\ref{rem:sym}).  The proof will be
  complete once we will have shown that $a=+\infty.$

  Let us suppose $a<+\infty$. The functions $t,t',\phi,\phi'$ are
  continue on $(-a,a)$, so there exist their left limit values
  $t_a,t'_a,\phi_a,\phi'_a$ as $s \to a$ with $s<a$, respectively.
  Observe they are all finite: $t'_a,\phi'_a\in[-1,1]$ and
  $t_a,\phi_a<+\infty$ because $s$ was the arc-length parameter of
  $\g$ and $a<+\infty$.  By Lemma \ref{lem:no0}, $t'$ never vanishes
  in $(-a,a)$. Since $t'(0)=1$, we deduce $t'>0$ in $(-a,a)$, and
  then $t'_a\geq 0$.

  If $t'_a>0$ then we can solve the initial value
  problem~\eqref{eq:system.2} with initial conditions
  $\phi(a)=\phi_a$, $\phi'(a)=\phi'_a$, $t(a)=t_a$ and $t'(a)=t'_a$,
  extending the solution $(\phi,t)$ beyond $s=a$. That contradicts the
  assumption that the interval $(-a,a)$ is maximal.  Hence $t'_a=0$.
  But the maximum principle for surfaces with boundary will then say
  that $\Sigma\subset\M_\ve\times\{t_a\}$, where $\Sigma$ is the
  rotational special Weingarten surface of minimal type defined by
  $\phi,t$, which contradicts $t'(0)=1$. This contradiction proves
  Theorem~\ref{th:exist1}.
   \end{proof}

\begin{theorem}
  \label{th:exist2}
  Let $f\in {\mathcal C}^1([0,+\infty))$ be an elliptic function
  such that $f(0)=0$. Given $\phi_0>0$
  when $\e=-1$ or $\phi_0\in(0,\pi/2)$ when $\e=1$, satisfying
  \begin{equation}
    \label{cond1}
    \lim_{r\to +\infty}(r-f(r^2))>\eta_\e(\phi_0),
  \end{equation}
  there exists a unique rotational special Weingarten surface $\Sigma$
  without boundary of minimal type in $\M_\ve\times\R$ obtained by
  rotating a curve $\g(s)=(S_\ve(\phi(s)), 0, C_\ve(\phi(s)), t(s))$,
  with $s\in\R$, $\g$ parameterized by arc-length and such that
  $t(0)=0$, $t'<0$ and $\phi$ takes its minimum value $\phi_0$ at
  $s=0$.
\end{theorem}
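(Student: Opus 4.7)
The plan is to mirror the proof of Theorem~\ref{th:exist1} with the roles of $t'$, the relevant one-sided limit of $r-f(r^2)$, and the direction of monotonicity of $G(\phi_0,y,\cdot)$ all reversed in sign. I would begin by applying Picard--Lindel\"of to the system~\eqref{eq:system.2} with initial data $\phi(0)=\phi_0$, $\phi'(0)=0$, $t(0)=0$, $t'(0)=-1$, obtaining a unique arclength-parametrized local solution $(\phi,t)$. By Lemma~\ref{symmetry} and Remark~\ref{rem:sym}, applied at the critical point $s=0$ of $\phi$, its maximal interval of definition will be of the form $(-a,a)$ for some $a\in(0,+\infty]$.

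Next I would check that this initial datum does produce a local minimum of $\phi$ at $s=0$, i.e.\ that $\phi''(0)>0$. The Weingarten equation at $s=0$ reads $G(\phi_0,-1,\phi''(0))=0$, where $G$ is as in~\eqref{eq:G}. Since $y=-1<0$, Claim~\ref{lem:G} says $z\mapsto G(\phi_0,-1,z)$ is strictly increasing. A direct computation, together with Lemma~\ref{lem:increasing} (applied to $\bar g$), gives
\[
G(\phi_0,-1,0)=-\frac{\eta_\ve(\phi_0)}{2}-f\!\left(\frac{\eta_\ve(\phi_0)^2}{4}\right)=-\bar g\!\left(\frac{\eta_\ve(\phi_0)}{2}\right)<0,
\]
while, substituting $r=\frac{z+\eta_\ve(\phi_0)}{2}$,
\[
\lim_{z\to+\infty} G(\phi_0,-1,z)=\lim_{r\to+\infty}\bigl(r-f(r^2)\bigr)-\eta_\ve(\phi_0)>0
\]
by the hypothesis~\eqref{cond1}. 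Hence there exists a unique $\phi''_0>0$ with $G(\phi_0,-1,\phi''_0)=0$, and this value must coincide with $\phi''(0)$, so $\phi$ has a strict local minimum at $s=0$.

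Finally, to prove $a=+\infty$ I would argue by contradiction, exactly as in Theorem~\ref{th:exist1}. If $a<+\infty$, then since $s$ is the arclength parameter the functions $\phi,\phi',t,t'$ admit finite left limits $\phi_a,\phi'_a,t_a,t'_a$ at $s=a$. By Lemma~\ref{lem:no0}, $t'$ never vanishes on $(-a,a)$, and since $t'(0)=-1$ it follows that $t'<0$ throughout, hence $t'_a\le 0$. If $t'_a<0$, one can restart the initial value problem~\eqref{eq:system.2} at $s=a$ and extend $(\phi,t)$ past $a$, contradicting maximality; if instead $t'_a=0$, then the boundary maximum principle (Proposition~\ref{prop:maximum}) applied to the resulting surface and the horizontal slice $\M_\ve\times\{t_a\}$ forces $\Sigma\subset\M_\ve\times\{t_a\}$, contradicting $t'(0)=-1$. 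The only genuinely new ingredient compared with Theorem~\ref{th:exist1} is the sign tracking in the two limit computations, so I do not expect a serious obstacle: the delicate point is verifying that hypothesis~\eqref{cond1} is used in precisely the direction that guarantees the existence of a positive $\phi''_0$ with $G(\phi_0,-1,\phi''_0)=0$, which is what makes $s=0$ a \emph{minimum} (rather than a maximum) of $\phi$.
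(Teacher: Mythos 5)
Your proposal is correct and follows essentially the same route as the paper's own proof: the same application of Picard--Lindel\"of to~\eqref{eq:system.2} with $t'(0)=-1$, the same use of Claim~\ref{lem:G} and the sign/limit computations for $G(\phi_0,-1,\cdot)$ under hypothesis~\eqref{cond1} to get $\phi''(0)=\phi''_0>0$, and the same extension-by-contradiction argument borrowed from Theorem~\ref{th:exist1} (which is exactly how the paper concludes, with ``we finish as in Theorem~\ref{th:exist1}''). No gaps.
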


\begin{proof}
  Picard-Lindel\"{o}f Theorem ensures there is a unique solution
  $(\phi,t)$ to the initial value problem~\eqref{eq:system.2} with
  $\phi(0)=\phi_0$, $\phi'(0)=0$, $t(0)=0$ and $t'(0)=-1$; $(\phi,t)$
  defined in $(-s_1,s_1)$, for some $s_1>0$.

  In particular, $G(\phi_0,-1,\phi''(0))=0$.  By
   Claim~\ref{lem:G},
  $G(\phi_0,-1,z)$ is strictly increasing with respect the $z$
  variable. Moreover, $G(\phi_0,-1,0)=-\frac{\eta_\ve(\phi_0)}2-f
  \left(\frac{\eta_\ve(\phi_0)^2}4\right)<0$ by
  Lemma~\ref{lem:increasing}, as $\eta_\ve(\phi_0)>0$, and
  \[
  \lim_{z\to +\infty}G(\phi_0,-1,z) = \lim_{z\to +\infty}
  \left(-\eta_\ve(\phi_0)+\frac{z+\eta_\ve(\phi_0)}2-
    f\left(\frac{(z+\eta_\ve(\phi_0))^2}4\right)\right)
  \]
  \[
  =-\eta_\ve(\phi_0)+\lim_{r\to +\infty}(r-f(r^2))>0
  \]
  by~\eqref{cond1}. Hence there exists a unique $\phi''_0\in\R$
  such that $G(\phi_0,-1,\phi''_0)=0$ (so it must be
$\phi''(0)=\phi''_0$)
and $\phi''_0>0$. This says $\phi$ has a local minimum at $0$.

We finish as in Theorem~\ref{th:exist1}.
\end{proof}

  Theorems~\ref{th:h2} and~\ref{th:s2} describe the rotational special
  Weingarten surfaces obtained in Theorems~\ref{th:exist1}
  and~\ref{th:exist2}.

\begin{theorem}[Classification of special embedded rotational surfaces
  of minimal type]
  \label{th:classify}
  Let $\Sigma$ be a Weingarten rotational special surface of minimal
  type in $\M_\ve \times
  \R$, possibly with boundary.
  \begin{itemize}
  \item If $\e=-1,$ then $\Sigma$ is contained in a horizontal slice
    $\H^2\times\{t_0\}$ or in a catenoidal type surface described by
    Theorem~\ref{th:h2}.
  \item When $\e=1$, $\Sigma$ is contained in a horizontal slice
    $\esf^2\times\{t_0\}$, in the vertical cylinder $\{\phi=\pi/2\}$
    or in a unduloidal type surface described by Theorem~\ref{th:s2}.
  \end{itemize}
\end{theorem}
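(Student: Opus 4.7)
The strategy is to reduce to the boundaryless case already classified by Theorems~\ref{th:h2} and~\ref{th:s2} by extending the generating curve of $\Sigma$ maximally via the ODE. If $\Sigma$ is contained in a horizontal slice $\M_\ve\times\{t_0\}$, or (when $\ve=1$) in the vertical cylinder $\{\phi=\pi/2\}$ singled out by Lemma~\ref{lem:cyl}, we are done, so assume neither. By Lemma~\ref{lem:no0}, $t'$ does not vanish at interior points of the generating curve $\g$, so by continuity and connectedness either $t'>0$ or $t'<0$ throughout the parameter interval $I$; in the case $\ve=1$, Lemma~\ref{another.solution} allows us to assume without loss of generality that $t'>0$.

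Let $\widetilde\g$ denote the maximal extension of $\g$ as an arc-length parameterized solution of system~\eqref{eq:system.2}, defined on some open interval $\widetilde I=(a,b)\supset I$ on which $t'>0$. I claim $\widetilde I=\R$. Suppose for contradiction $b<+\infty$. Since $|\phi'|,|t'|\leq 1$ on $(a,b)$, the functions $\phi,\phi',t,t'$ are uniformly Lipschitz and extend continuously to $s=b$; call the limits $\phi_b,\phi'_b,t_b,t'_b$. They satisfy $(\phi'_b)^2+(t'_b)^2=1$ and $t'_b\geq 0$. If $t'_b>0$, Picard--Lindel\"of applied to~\eqref{eq:system.2} at the data $(\phi_b,\phi'_b,t_b,t'_b)$ yields a solution on $(b-\delta,b+\delta)$ which, by uniqueness, glues with $\widetilde\g$; this contradicts the maximality of $b$. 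If $t'_b=0$, then $\phi'_b=\pm 1$, the tangent vector $\widetilde\g'(b)$ is horizontal, and the $\theta$-orbit of the limit point forms a circle $C$ lying in the slice $\M_\ve\times\{t_b\}$; along $C$ the tangent plane to $\widetilde\Sigma_{\widetilde\g}$ is horizontal, its normal~\eqref{eq:normal} is vertical, and the tangent to $C$ itself is $\partial_\theta$, so these data match those of a suitable piece of $\M_\ve\times\{t_b\}$ bounded by $C$. Applying the boundary maximum principle (Proposition~\ref{prop:maximum}, second bullet) to these two surfaces we deduce that $\widetilde\Sigma_{\widetilde\g}$ and $\M_\ve\times\{t_b\}$ coincide near $C$, and then globally by the interior maximum principle, contradicting our standing assumption. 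A symmetric argument gives $a=-\infty$, hence $\widetilde I=\R$.

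The surface $\widetilde\Sigma_{\widetilde\g}$ is therefore a complete rotational special Weingarten surface of minimal type without boundary, so by Theorem~\ref{th:h2} (if $\ve=-1$) or Theorem~\ref{th:s2} (if $\ve=1$) it is either a horizontal slice, the vertical cylinder $\{\phi=\pi/2\}$, or a catenoidal/unduloidal type surface as described there; the first two options were excluded by construction, so $\widetilde\Sigma_{\widetilde\g}$ is of catenoidal (resp.\ unduloidal) type, and $\Sigma\subset\widetilde\Sigma_{\widetilde\g}$ gives the conclusion. The main obstacle is the $t'_b=0$ step in the extension argument: system~\eqref{eq:system.2} is singular at $t'=0$, so direct ODE extension fails and one must instead verify in detail the tangent-plane, tangent-boundary, and unit-normal matching hypotheses needed to invoke the boundary maximum principle against an appropriately chosen subdomain of $\M_\ve\times\{t_b\}$.
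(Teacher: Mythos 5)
Your proposal is correct and follows essentially the same route as the paper: after discarding the slice and cylinder cases, the paper likewise extends the generating curve to all of $\R$ by the argument of Theorem~\ref{th:exist1} (Picard--Lindel\"{o}f continuation when the limit value of $t'$ is nonzero, and the boundary maximum principle against a horizontal slice when that limit vanishes) and then invokes Theorems~\ref{th:h2} and~\ref{th:s2}. The extra care you take in verifying the tangency and normal-matching hypotheses of Proposition~\ref{prop:maximum} at the limit circle is a detail the paper leaves implicit.
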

\begin{proof}
  Assume that $\Sigma$ is generated by $\g(s)=(S_\ve(\phi(s)),
  0,C_\ve(\phi(s)),t(s))$ where $s \in I$ is the arc-length parameter
  and $I\subset\R$ an open interval.

  If $t$ is constant, then $\Sigma$ is contained in a horizontal
  slice.  If $\phi$ is a constant function, then Lemma~\ref{lem:cyl}
  says $\M_\ve=\esf^2$, and $\Sigma$ is contained in the vertical
  cylinder $\{\phi(s)=\pi/2\}$.  Hereafter we can assume $\phi$ and
  $t$ are not constant.

  If $I=\R$, then we can deduce the surface has no boundary, and the
  result follows from Theorems~\ref{th:s2} and~\ref{th:h2}.  Then
  assume $I=(a,b)\neq\R$.  But we can argue as in Theorem
  \ref{th:exist1} to extend the functions $\phi,t$ to $\R$, which
  proves the theorem.
\end{proof}

\begin{remark}
  R. Sa Earp and E. Toubiana \cite{ST1} showed that in $\R^3$ there
  exist rotational special Weingarten surfaces of minimal type with
  boundary which are not contained in a complete example without
  boundary (the only possible complete examples without boundary in
  $\R^3$ are of catenoidal type).  Theorem \ref{th:classify} shows
  such surfaces do not exist in $\M_\e\times\R$.
\end{remark}

\begin{remark}
  If the elliptic function $f$ vanishes identically, then Theorems
  \ref{th:exist1} and \ref{th:exist2} provides existence and
  uniqueness results for rotational minimal surfaces in
  $\M_\ve\times\R.$ Observe that, if $\M_\e=\esf^2,$ the hypotheses of
  such theorems are satisfied for every $\phi_0 \in (0,\pi/2).$
  Finally Theorem \ref{th:classify} gives the classification of those
  surfaces.
\end{remark}

\end{document}